\newtheorem*{theoA}{Theorem A}
\newtheorem*{theoB}{Theorem B}
\newtheorem*{theoC}{Theorem C}
\newtheorem{theo}{Theorem}[section]
\newtheorem{lem}{Lemma}[section]
\newtheorem{rem}{Remark}[section]
\newtheorem{ques}{Question}[section]
\newtheorem{defi}{Definition}[section]
\newtheorem{open problem}{Open problem}[section]
\newcommand{\pa}{\partial}
\newcommand{\la}{\langle}
\newcommand{\ra}{\rangle}
\newcommand{\be}{\begin{equation}}
\newcommand{\ee}{\end{equation}}
\newcommand{\bs}{\begin{small}}
\newcommand{\es}{\end{small}}
\newcommand{\beas}{\begin{eqnarray*}}
\newcommand{\eeas}{\end{eqnarray*}}
\newcommand{\bea}{\begin{eqnarray}}
\newcommand{\eea}{\end{eqnarray}}
\renewcommand{\epsilon}{\varepsilon}
\numberwithin{equation}{section}
\begin{document}
\title[multidimensional refined Bohr's inequalities]{Operator valued analogues of multidimensional refined Bohr's inequalities }
\author[V. Allu, R. Biswas and R. Mandal ]{ Vasudevarao Allu, Raju Biswas and Rajib Mandal}
\date{}
\address{Indian Institute of Technology Bhubaneswar, School of Basic Science, Bhubaneswar-752050, Odisha, India.}
\email{avrao@iitbbs.ac.in}
\address{Department of Mathematics, Raiganj University, Raiganj, West Bengal-733134, India.}
\email{rajubiswasjanu02@gmail.com}
\address{Department of Mathematics, Raiganj University, Raiganj, West Bengal-733134, India.}
\email{rajibmathresearch@gmail.com}
\maketitle
\let\thefootnote\relax
\footnotetext{2020 Mathematics Subject Classification: 32A05, 32A10, 47A56, 47A63, 30H05, .}
\footnotetext{Key words and phrases: Bohr radius, complete circular domain, homogeneous polynomial, operator valued analytic functions.}
\footnotetext{Type set by \AmS -\LaTeX}
\begin{abstract} 
Let $\mathcal{B}(\mathcal{H})$ denote the Banach algebra of all bounded linear operators acting on complex Hilbert spaces $\mathcal{H}$. 
In this paper, we first establish several sharply refined versions of Bohr's inequality analogues with operator valued functions in the class $\mathcal{B}(\mathbb{D}, \mathcal{B}(\mathcal{H}))$ of bounded analytic functions from the unit disk $\mathbb{D}$ to $\mathcal{B}(\mathcal{H})$ with $\sup_{|z|<1}\Vert f(z)\leq 1$ by utilizing a certain power of the function's norm. 
Additionally, we establish several multidimensional analogues of refined Bohr's inequalities by using operator valued functions in the complete circular domain $\Omega\subset\mathbb{C}^n$. All of the results are sharp.
\end{abstract}
\section{Introduction and Preliminaries} 
The classical Bohr theorem \cite{17} was examined a century ago and has since generated a substantial volume of research activity, which is collectively referred to as the Bohr phenomenon. Let $\mathcal{B}(\mathcal{H})$ denote the Banach algebra of all bounded linear operators acting on complex Hilbert spaces $\mathcal{H}$ with the norm
\beas \Vert A\Vert =\sup_{h\in\mathcal{H}\setminus\{0\}} \frac{\Vert Ah\Vert}{\Vert h\Vert}=\sup_{h\in\mathcal{H},\Vert h\Vert=1}\Vert Ah\Vert, \;\text{where}\;A\in\mathcal{B}(\mathcal{H}).\eeas
Let $H^\infty(\mathbb{D}, X)$ denote the space of bounded analytic functions from the open unit disk $\mathbb{D}:=\{z\in\mathbb{C}:|z|<1\}$ into a complex Banach space $X$ with 
$\Vert f\Vert_{H^\infty(\mathbb{D}, X)}:=\sup_{|z|<1}\Vert f(z)\Vert$. Let $\mathcal{B}(\mathbb{D},X)$ be the class of functions $f$ in $H^\infty(\mathbb{D}, X)$ with $\Vert f\Vert_{H^\infty(\mathbb{D}, X)}\leq 1$. The Bohr radius $R(X)$ \cite{15} for the class $\mathcal{B}(\mathbb{D},X)$ is defined by 
\beas R(X):=\sup\left\{r\in(0,1): \sum_{n=0}^{\infty} \Vert a_n \Vert r^n\leq 1\;\text{for all}\; f(z)=\sum_{n=0}^{\infty} a_n z^n\in\mathcal{B}(\mathbb{D},X), z\in\mathbb{D}\right\}.\eeas 
H. Bohr \cite{17} showed in a remarkable result (improved form) that if $X=\Bbb{C}$, then $R(X)=1/3$, where the norm of $X$ is the usual modulus of complex numbers.
Dixon \cite{21} brought back interest in Bohr's theorem after utilizing it to answer a long-standing query concerning the characterization of Banach algebras satisfying the non-unital 
von Neumann inequality. A substantial body of research has been conducted over the past two decades on a range of topics within complex analysis, including the extensions of 
analytic functions of several complex variables, planar harmonic mappings, polynomials, solutions of elliptic partial differential equations, and other abstract settings. In 1997, Boas 
and Khavinson \cite{16} established the upper and lower bounds of the $n$-dimensional Bohr radius $K_n$ for the Hardy space of bound analytic functions on the unit polydisk.
Further, the authors show that the Bohr radius $K_n$ approaches zero as the domain's dimension $n$ increases to $\infty$. 
In 2006, Defant and Frerick \cite{19} derived an improved lower estimate for $K_n$. Another estimation of $K_n$ was obtained by Defant {\it et al.} {\cite{20} by employing the 
hypercontractivity of the polynomial Bohnenblust-Hille inequality. In 2014, Bayart {\it et al.} \cite{12} derived the exact asymptotic behavior of $K_n$.
In 2019, Popescu \cite{29} expanded the significance of the Bohr inequality for free holomorphic functions to polyballs. 
We refer to \cite{2,3,4,5,6,7,11,13,14,15,22,24,25,28,29,RRS} for additional information on the many aspects and generalizations of the multidimensional Bohr's inequality.
\section{Bohr theorem for operator valued analytic function} 
The primary aim of this paper is to investigate Bohr's inequalities for operator valued analytic functions defined on the unit disk $\mathbb{D}$, specifically for functions in $\mathcal{B}(\mathbb{D}, \mathcal{B}(\mathcal{H}))$. Before we continue, there are some basic notations that need to be fixed. For $T\in \mathcal{B}(\mathcal{H})$, $\Vert T\Vert$ denotes the operator norm of $T$ and the adjoint 
operator $T^*: \mathcal{H} \to\mathcal{H}$ of $T$ is defined by $\la T x, y\ra= \la x, T^*y\ra$ for all $x, y\in\mathcal{H}$. The operator $T$ is said to be normal if $T^*T 
=TT^*$, self-adjoint if $T^*=T$ and positive if $\la T x, x\ra\geq 0$ for all $x\in\mathcal{H}$. The absolute value of $T$ is defined by $|T| := (T^*T)^{1/2}$, while $S^{1/2}$ 
denotes the unique positive square root of a positive operator $S$. Let $I$ be the identity operator on $\mathcal{H}$. Let $f\in H^\infty(\mathbb{D}, \mathcal{B}(\mathcal{H}))$ 
be a bounded analytic function with the expansion
\bea\label{ee1} f(z)=\sum\limits_{n=0}^\infty A_nz^n\;\;\text{for}\;z\in\mathbb{D}, \text{where}\; A_n\in\mathcal{B}(\mathcal{H})\;\text{for all}\; n\in\mathbb{N}\cup\{0\}.\eea
\indent 
For operator valued bounded analytic functions in the unit disk $\Bbb{D}$, Popescu \cite{29} proved the following notable result in 2019. This result is analogous to the classical Bohr theorem.
\begin{theoA}\cite{29}
Let $f\in\mathcal{H}^\infty(\mathbb{D}, \mathcal{B}(\mathcal{H}))$ be an operator valued bounded analytic function with the expansion (\ref{ee1}) such that $A_0= a_0 I$, $a_0\in\mathbb{C}$. Then 
\bea\label{ee2a}\sum\limits_{n=0}^\infty \Vert A_n\Vert r^n\leq \Vert f(z)\Vert_{\mathcal{H}^\infty(\mathbb{D}, \mathcal{B}(\mathcal{H}))}\;\text{for}\;|z|=r\leq 1/3\eea
and $1/3$ is the best possible constant. Moreover, the inequality is strict unless $f$ is a constant. \end{theoA}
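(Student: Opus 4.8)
The plan is to run the classical proof of Bohr's inequality, promoting every scalar estimate to an operator-norm estimate. By homogeneity of the asserted inequality I may assume $\Vert f\Vert_{\mathcal{H}^\infty(\mathbb{D},\mathcal{B}(\mathcal{H}))}=1$ and must then show $\sum_{n=0}^\infty\Vert A_n\Vert r^n\le 1$ for $r\le 1/3$. First I would dispose of the case $|a_0|=1$: for every unit vector $h\in\mathcal{H}$ the scalar function $z\mapsto\ol{a_0}\la f(z)h,h\ra$ is analytic, bounded by $1$, and equals $1$ at $z=0$, so by the maximum modulus principle it is identically $1$; equality in the Cauchy--Schwarz inequality then forces $f(z)h=a_0h$ for all $z$ and all such $h$, i.e. $f\equiv a_0 I$ is constant and both sides equal $1$. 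Hence I may assume $|a_0|<1$.

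The crux is the coefficient bound $\Vert A_n\Vert\le 1-|a_0|^2$ for $n\ge 1$. For $n=1$ I would introduce the operator M\"obius transform $g(z):=\bigl(I-\ol{a_0}f(z)\bigr)^{-1}\bigl(f(z)-a_0I\bigr)$, which is analytic on $\mathbb{D}$ since $\Vert\ol{a_0}f(z)\Vert<1$, and which satisfies $\Vert g(z)\Vert\le 1$ and $g(0)=0$; the operator-valued Schwarz lemma gives $g(z)=zh(z)$ with $\Vert h(z)\Vert\le 1$ on $\mathbb{D}$, while differentiating $u(z)g(z)=f(z)-a_0I$ with $u(z)=I-\ol{a_0}f(z)$ at $z=0$ yields $A_1=(1-|a_0|^2)g'(0)=(1-|a_0|^2)h(0)$, so $\Vert A_1\Vert\le 1-|a_0|^2$. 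For general $n\ge 1$ I would apply the case $n=1$ to $F(z):=\frac1n\sum_{j=0}^{n-1}f(\omega^j w)$, where $\omega=e^{2\pi i/n}$ and $w$ is any $n$-th root of $z$: invariance under $w\mapsto\omega w$ makes $F$ a single-valued analytic function of $z$ on $\mathbb{D}$ with $\Vert F\Vert\le 1$, $F(0)=a_0I$, and first Taylor coefficient exactly $A_n$.

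Granting this, put $x:=|a_0|\in[0,1)$ and estimate
\[
\sum_{n=0}^\infty\Vert A_n\Vert r^n\le x+(1-x^2)\sum_{n=1}^\infty r^n=x+(1-x^2)\frac{r}{1-r}=:\phi_r(x).
\]
Here $\phi_r$ is strictly concave on $[0,1]$ with $\phi_r(1)=1$ and $\phi_r'(1)=\frac{1-3r}{1-r}\ge 0$ for $r\le 1/3$, hence nondecreasing on $[0,1]$; thus $\phi_r(x)\le\phi_r(1)=1$, which proves the inequality, and strict concavity gives $\phi_r(x)<1$ whenever $x<1$, i.e. whenever $f$ is nonconstant, which is the asserted strictness. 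For sharpness I would test the functions $f_a(z)=\frac{a-z}{1-az}I$, $a\in(0,1)$, for which $\Vert f_a\Vert=1$, $A_0=aI$, $A_n=-(1-a^2)a^{n-1}I$ $(n\ge1)$, and hence $\sum_{n=0}^\infty\Vert A_n\Vert r^n=a+(1-a^2)\frac{r}{1-ar}=:s(a)$; since $s(1)=1$ and $s'(1)=\frac{1-3r}{1-r}<0$ when $r>1/3$, choosing $a<1$ close enough to $1$ makes $\sum_{n=0}^\infty\Vert A_n\Vert r^n>1=\Vert f_a\Vert$, so $1/3$ cannot be enlarged.

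I expect the main obstacle to be the operator-theoretic ingredients with no one-variable analogue: the operator-valued Schwarz lemma in the precise form used (that $g(z)/z$ extends analytically with norm $\le 1$), the verification that the averaged function $F$ is genuinely single-valued and analytic in $z$, and running the boundary case $|a_0|=1$ through the vector functionals $\la f(\cdot)h,h\ra$ rather than through $\Vert f(\cdot)\Vert$ directly. Once these are in place, the remainder is the elementary optimization of $\phi_r$ on $[0,1]$.
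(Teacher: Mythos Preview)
The paper does not give its own proof of Theorem~A; the result is quoted from \cite{29} as background, so there is no in-paper argument to compare against directly. Your proof is correct and complete as outlined: the boundary case $|a_0|=1$, the M\"obius transform $g(z)=(I-\ol{a_0}f(z))^{-1}(f(z)-a_0I)$ combined with the operator Schwarz lemma to get $\Vert A_1\Vert\le 1-|a_0|^2$, the root-of-unity averaging to promote this to all $n\ge 1$, the concavity argument for $\phi_r$, and the sharpness via $f_a(z)=\frac{a-z}{1-az}I$ all go through. The contractivity $\Vert g(z)\Vert\le 1$ that you flag as a potential obstacle is in fact a two-line computation: $(I-\ol{a_0}T)^*(I-\ol{a_0}T)-(T-a_0I)^*(T-a_0I)=(1-|a_0|^2)(I-T^*T)\ge 0$ for any contraction $T$, so no appeal to von~Neumann's inequality is needed.

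The one point of contact with the paper is the coefficient bound $\Vert A_n\Vert\le 1-|a_0|^2$. The paper records this as inequality~(\ref{s1}), obtained in a single stroke as the special case $a=0$ of Lemma~\ref{lem1} (the Anderson--Rovnyak operator Schwarz--Pick estimate). Your derivation via M\"obius transform plus averaging is more hands-on and self-contained, while the paper's route is shorter once Lemma~\ref{lem1} is taken off the shelf; both yield exactly the same estimate, after which the remaining optimization is identical.
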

If every function in $\mathcal{B}(\mathbb{D}, \mathcal{B}(\mathcal{H}))$ satisfies the inequality (\ref{ee2a}) for $r\leq1/3$, then
$\mathcal{B}(\mathbb{D}, \mathcal{B}(\mathcal{H}))$ satisfies the Bohr phenomenon. It is important to note that the constant $1/3$ is independent of the coefficient of the function.\\[2mm]
\indent 
In recent years, a prominent area of research in the field of one and several complex variables has been the examination of several variations of Bohr's inequality in the context of 
bounded analytic functions. Several improved and refined versions of Bohr’s inequality have been established by different authors (see \cite{1,9,23,25,26,27}). We recall some of them here.\\[2mm]
 Let $f\in\mathcal{H}^\infty(\mathbb{D}, \mathbb{C})$ be of the form 
\bea\label{ee2b}f(z)=\sum\limits_{k=0}^\infty a_k z^k\;\;\text{for}\;z\in\mathbb{D}.\eea
In 2020, Ponnusamy {\it et al.} \cite{30} obtained the following refined Bohr inequality.
\begin{theoB}\cite{30}
Let $f\in\mathcal{B}(\mathbb{D}, \mathbb{C})$ of the form (\ref{ee2b}). Then
\beas \sum_{k=0}^\infty |a_k|r^k+\left(\frac{1}{1+|a_0|}+\frac{r}{1-r}\right)\sum_{k=1}^\infty |a_k|^2 r^{2k}\leq 1\;\;\text{for}\;\;r\leq 1/(2+|a_0|).\eeas 
The numbers $1/(2+|a_0|)$ and $1/(1+|a_0|)$ cannot be improved.
\end{theoB}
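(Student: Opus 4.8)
The plan is to normalise, reduce Theorem~B to a single scalar inequality, and then exploit the Schwarz--Pick geometry of $f$ on the circle $|z|=r$. Since $f\mapsto e^{-i\arg a_0}f$ changes neither of the two sums nor $|a_0|$, I may assume $a_0=a:=|a_0|\in[0,1)$; the case $a=1$ is trivial ($f$ is a unimodular constant and the left side is $1$), so take $a<1$. Write $S:=\sum_{k\ge1}|a_k|r^k$, $T:=\sum_{k\ge1}|a_k|^2r^{2k}$ and $c:=\frac1{1+a}+\frac r{1-r}=\frac{1+ar}{(1+a)(1-r)}$. Because $a+\frac{(1-a^2)r}{1-r}\le 1\iff(1+a)r\le1-r\iff r\le\frac1{2+a}$, Theorem~B (for $r\le\frac1{2+a}$) follows at once from the $r$-uniform estimate
\begin{equation*}
S+cT\ \le\ \frac{(1-a^2)r}{1-r}.\tag{$\ast$}
\end{equation*}
So the whole problem reduces to proving $(\ast)$ for every $f\in\mathcal B(\mathbb D,\mathbb C)$ and every $r\in(0,1)$. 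Note that $(\ast)$ holds with equality, at every $r$, for every disc automorphism $f(z)=\frac{a+\lambda z}{1+a\lambda z}$ with $|\lambda|=1$; this will also yield the sharpness statement.

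The key analytic input is a sharp bound on $T$. By Schwarz--Pick, $\bigl|\tfrac{f(z)-a}{1-af(z)}\bigr|\le|z|$, so for $|z|=r$ the value $f(z)$ lies in the closed pseudo-hyperbolic disc $\overline{D(C,R)}$ with $C=\frac{a(1-r^2)}{1-a^2r^2}$ and $R=\frac{(1-a^2)r}{1-a^2r^2}$. Writing $f(re^{i\theta})=C+R\tau(\theta)$ with $|\tau|\le1$, noting that the $0$-th Fourier coefficient of $\theta\mapsto f(re^{i\theta})$ is $a_0=a$ (so $\frac1{2\pi}\int\tau\,d\theta=\frac{a-C}{R}$), and applying Parseval, $a^2+T=\frac1{2\pi}\int_0^{2\pi}|f(re^{i\theta})|^2\,d\theta=2aC-C^2+R^2\cdot\frac1{2\pi}\int_0^{2\pi}|\tau|^2\,d\theta\le a^2-(a-C)^2+R^2$, which simplifies to $T\le\frac{(1-a^2)^2r^2}{1-a^2r^2}=:T_{\max}$. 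Two cheap facts are also at hand: the classical coefficient estimate $S\le(1-a^2)\frac r{1-r}$, and $T\le S^2$ (the cross terms of $S^2$ are nonnegative).

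Now I would establish $(\ast)$ by cases on the size of $S$. Put $S_0:=\sqrt{T_{\max}}=\frac{(1-a^2)r}{\sqrt{1-a^2r^2}}$; one checks $S_0\le\frac{(1-a^2)r}{1-r}$. If $S\le S_0$, then $T\le S^2$ together with the monotonicity of $x\mapsto x+cx^2$ reduce $(\ast)$ to $S_0+cS_0^2\le\frac{(1-a^2)r}{1-r}$, which after dividing by $(1-a^2)r$ collapses to $\frac1{\sqrt{1-a^2r^2}}\le\frac1{1-ar}$, i.e. $(1-ar)^2\le1-a^2r^2$ — true since $ar\ge0$. If instead $S>S_0$, I would use $T\le T_{\max}$, so $(\ast)$ becomes $S\le\frac{(1-a^2)r}{1-r}-cT_{\max}=\frac{(1-a^2)r}{1-ar}$; \emph{this is the hard part}, because $S$ can genuinely exceed $\frac{(1-a^2)r}{1-ar}$ and both the classical bound and Cauchy--Schwarz are too weak to rule that out. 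The resolution must be a joint use of Schwarz--Pick: writing $f=\frac{a+g}{1+ag}$ with $g=\frac{f-a}{1-af}$ a Schwarz function, one has $|a_1|=(1-a^2)|g_1|$, and $|a_1|=1-a^2$ forces $g$ to be a rotation — whence $f$ is a disc automorphism, for which $S=\frac{(1-a^2)r}{1-ar}$ and $T=T_{\max}$ \emph{both} hold with equality; combining the majorant estimate for $\sum_{k\ge1}|a_k|r^k$ in terms of $|g_1|$ with the corresponding sharpening of the $T$-bound when $|g_1|<1$ should pin $S+cT$ below $\frac{(1-a^2)r}{1-r}$. The incompatibility of "$S$ large'' with "$T$ large'' is the crux of the whole argument, and the only place where the full force of $|f|\le1$ (beyond the two coefficient estimates above) is really used.

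Once $(\ast)$ is proved, Theorem~B for $r\le\frac1{2+a}$ follows as explained. For sharpness, take $f(z)=\frac{a+z}{1+az}$: a direct computation (using $(1+a)S+\frac{1+ar}{1-r}T=\frac{(1-a^2)(1+a)r}{1-r}$ for this $f$) gives that the left side of Theorem~B equals $a+\frac{(1-a^2)r}{1-r}$, which is exactly $1$ at $r=\frac1{2+a}$ and strictly exceeds $1$ for $r$ slightly larger; moreover the constant $\frac1{1+|a_0|}$ is forced by the identity $a+\frac1{1+a}(1-a^2)=1$. Hence neither $\frac1{2+|a_0|}$ nor $\frac1{1+|a_0|}$ can be improved.
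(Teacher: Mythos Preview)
Your reduction to the single inequality $(\ast)$ is correct and is exactly the heart of the matter, and your observation that $(\ast)$ holds with equality for every disc automorphism $\frac{a+\lambda z}{1+a\lambda z}$ gives the sharpness. The genuine gap is precisely where you flag it: in the case $S>S_0$ you would need $S\le\frac{(1-a^2)r}{1-ar}$, but this can fail (for example with $a=0$ and $f(z)=z\,\frac{z-b}{1-bz}$, $b\in(0,1)$, one computes $S=br+\frac{(1-b^2)r^2}{1-br}>r$ whenever $r>\frac{1}{1+2b}$), and your proposed repair---``combining the majorant estimate \dots with the corresponding sharpening of the $T$-bound \dots should pin $S+cT$ below''---is a hope, not an argument. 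The trade-off between $S$ large and $T$ large that you are reaching for is real, but you have not quantified it, so the proof is incomplete.

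The paper does not prove Theorem~B itself (it is quoted from \cite{30}), but the method used there, and mirrored in the paper's operator-valued analogues (see the proof of Lemma~\ref{lem3}, specifically (\ref{s2})--(\ref{s4})), obtains $(\ast)$ \emph{directly} from Carlson's coefficient inequalities (Lemma~\ref{lem21}): $|a_{2k+1}|\le 1-\sum_{j=0}^{k}|a_j|^2$ and $|a_{2k}|\le 1-\sum_{j=0}^{k-1}|a_j|^2-\frac{|a_k|^2}{1+a}$. Splitting $S=\sum_{k\ge1}|a_{2k}|r^{2k}+\sum_{k\ge0}|a_{2k+1}|r^{2k+1}$, inserting these bounds, and collapsing the resulting double sums (exactly the computation in (\ref{s3})) gives in one stroke
\[
S\ \le\ \frac{(1-a^2)r}{1-r}\;-\;\frac{1}{1+a}\,T\;-\;\frac{r}{1-r}\,T\ =\ \frac{(1-a^2)r}{1-r}-cT,
\]
which is $(\ast)$. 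No case split and no Parseval are needed; the joint control of $S$ by all of the $|a_j|^2$---the very ``incompatibility of $S$ large with $T$ large'' you were after---is already encoded in Carlson's inequalities.
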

Note that $1/3\leq 1/(2+|a_0|)\leq 1/2$. It is evident that the sharp Bohr radius cannot be obtained by directly substituting $a_0$. The improved versions of \textrm{Theorem B} obtained by Ponnusamy {\it et al.} \cite{30} are presented below.
\begin{theoC}\cite{30}
Let $f\in\mathcal{B}(\mathbb{D}, \mathbb{C})$ of the form (\ref{ee2b}). Then
\begin{enumerate}
\item[(a)] $\sum_{k=1}^\infty |a_k|r^k+\left(\frac{1}{1+|a_1|}+\frac{r}{1-r}\right)\sum_{k=2}^\infty |a_k|^2 r^{2k-1}\leq 1$ for $r\leq 3/5$. The number $3/5$ is sharp.
\item[(b)] $\sum_{k=1}^\infty |a_k|r^k+\left(\frac{r^{-1}}{1+|a_1|}+\frac{1}{1-r}\right)\sum_{k=0}^\infty |a_k|^2 r^{2k}\leq 1$ for $r\leq (5-\sqrt{17})/2$. The number $(5-\sqrt{17})/2$ is sharp.
\end{enumerate}
\end{theoC}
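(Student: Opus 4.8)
The plan is to reduce both parts to the sharp ``tail'' refinement of Bohr's inequality that sits behind Theorem B — a statement I would record first as a lemma, provable by the argument behind Theorem B — namely
\[
\sum_{k=1}^{\infty}|a_k|r^{k}+\Big(\frac{1}{1+|a_0|}+\frac{r}{1-r}\Big)\sum_{k=1}^{\infty}|a_k|^{2}r^{2k}\ \le\ (1-|a_0|^{2})\,\frac{r}{1-r}\qquad(0\le r<1),
\]
which I denote $(\ast)$. It is sharp, equality holding throughout for $f(z)=(|a_0|-z)/(1-|a_0|z)$, and it contains Theorem B: adding $|a_0|$ to both sides gives $\sum_{k\ge0}|a_k|r^{k}+(\cdots)\sum_{k\ge1}|a_k|^{2}r^{2k}\le|a_0|+(1-|a_0|^{2})\tfrac{r}{1-r}$, and $|a_0|+(1-|a_0|^{2})\tfrac{r}{1-r}\le1\iff r\le\tfrac{1}{2+|a_0|}$. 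The key manoeuvre for Theorem C is to apply $(\ast)$ to the shifted function $g(z)=(f(z)-a_0)/z=\sum_{m\ge0}a_{m+1}z^{m}$ after writing the left-hand sides of (a) and (b) as $r$ times a $(\ast)$-type expression in the coefficients of $g$.

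For \emph{(a)} I would first take $a_0=0$, so that $g=f/z\in\mathcal B(\mathbb D,\mathbb C)$ by Schwarz's lemma. From $\sum_{k\ge1}|a_k|r^{k}=r\big(|a_1|+\sum_{m\ge1}|a_{m+1}|r^{m}\big)$ and $\sum_{k\ge2}|a_k|^{2}r^{2k-1}=r\sum_{m\ge1}|a_{m+1}|^{2}r^{2m}$, the left side of (a) equals $r$ times the left side of $(\ast)$ for $g$, hence is at most $r\big[\,|a_1|+(1-|a_1|^{2})\tfrac{r}{1-r}\,\big]$. Both $r$ and $r/(1-r)$ increase in $r$, so it is enough to check $r=3/5$: with $b:=|a_1|\in[0,1]$ one computes $\tfrac{3}{5}\big[b+\tfrac{3}{2}(1-b^{2})\big]=1-\tfrac{9}{10}\big(b-\tfrac{1}{3}\big)^{2}\le1$, with equality exactly at $b=1/3$. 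Combining this with the equality case of $(\ast)$ (which forces $g(z)=(1/3-z)/(1-z/3)$) yields the extremal $f(z)=z(1-3z)/(3-z)$ and the optimality of $3/5$.

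For \emph{(b)}, again take $a_0=0$. The same substitution makes the left side of (b) equal to $r$ times the left side of $(\ast)$ for $g$, but with the quadratic sum now including the $m=0$ term $|a_1|^{2}$ (the $|a_0|^{2}$ term of $\sum_{k\ge0}|a_k|^{2}r^{2k}$ vanishes since $a_0=0$). Applying $(\ast)$ to the tail $m\ge1$ and collecting the $m=0$ term, the estimate becomes $r\big[\tfrac{b(1+2b)}{1+b}+\tfrac{r}{1-r}\big]$ with $b=|a_1|$; since $b\mapsto b(1+2b)/(1+b)$ is increasing on $[0,1]$, the extreme case is $b=1$, i.e.\ $f(z)=e^{i\theta}z$, giving $\tfrac{3r}{2}+\tfrac{r^{2}}{1-r}\le1$, equivalently $r^{2}-5r+2\ge0$, i.e.\ $r\le(5-\sqrt{17})/2$; equality at $r=(5-\sqrt{17})/2$ for $f(z)=z$ gives sharpness.

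The principal difficulty is the case $a_0\neq0$, where $g=(f-a_0)/z$ need not lie in $\mathcal B(\mathbb D,\mathbb C)$ (one only has $\|g\|_{\infty}\le1+|a_0|$), so $(\ast)$ cannot be invoked for $g$ directly. I would deal with this through the Schur algorithm: write $\big(f(z)-a_0\big)/\big(1-\overline{a_0}f(z)\big)=z\psi(z)$ with $\psi\in\mathcal B(\mathbb D,\mathbb C)$ and $\psi(0)=a_1/(1-|a_0|^{2})$, express each $a_k$ ($k\ge1$) through the coefficients of $\psi$, and use the resulting majorizations together with the sharp coefficient bounds $|a_k|\le1-|a_0|^{2}$ ($k\ge1$) and $\sum_{k\ge1}|a_k|^{2}\le1-|a_0|^{2}$ to show that enlarging $|a_0|$ only decreases the left-hand sides of (a) and (b); the extremal data therefore sit at $a_0=0$, reducing the general case to the one above. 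Making this monotonicity precise — equivalently, producing the correct $a_0$-dependent replacement for $(\ast)$ valid for $g$ — together with checking that the inequalities are strict below the stated radii, is where the real work lies.
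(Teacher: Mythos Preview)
Your approach for $a_0=0$ is correct and is essentially the paper's own method (the scalar case of Lemmas~\ref{lem3} and~\ref{lem6}). Your lemma $(\ast)$ is exactly what the paper derives, via Carlson's coefficient inequalities (Lemma~\ref{lem21}), as the estimate~\eqref{s4}; applying it to $g=f/z$ and maximising over $a=|a_1|$ produces the bound $ar+(1-a^{2})r^{2}/(1-r)$ for part~(a) and $ar+r^{2}/(1-r)+ra^{2}/(1+a)$ for part~(b), just as you write. The remaining differences are cosmetic: the paper locates the critical point $a=(1-r)/(2r)$ and solves for the radius, while you plug in $r=3/5$ and complete the square; for sharpness the paper sweeps through the whole family $\Psi_a$ of~\eqref{s5}, whereas you exhibit the single extremal $a=1/3$ (for (a)) or let $a\to1^{-}$ (for (b)).

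The ``principal difficulty'' you identify, however, is a phantom. Theorem~C in \cite{30} is stated under the hypothesis $f(0)=0$; the present paper simply omitted that condition when transcribing the result, but every one of its own operator-valued analogues (Lemmas~\ref{lem3}--\ref{lem8}) takes $f(z)=\sum_{n\ge1}A_nz^n$. Indeed part~(b) is plainly false for $a_0\ne0$, since the $k=0$ summand contributes $\tfrac{r^{-1}}{1+|a_1|}\,|a_0|^{2}$, which blows up as $r\to0^{+}$. Your Schur-algorithm reduction, the proposed monotonicity-in-$|a_0|$ step, and the accompanying worry are therefore unnecessary: the case $a_0=0$ \emph{is} the full statement, and you have already proved it.
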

\section{Operator valued analogues of multidimensional Bohr inequality}
Unlike the study of the Bohr inequality for certain classes of analytic functions of one complex variable, several researchers investigate this topic with respect to
functions of n-complex variables. Before we proceed further, it is necessary to introduce some key concepts. Let $\alpha$ be an $n$-tuple
$(\alpha_1,\alpha_2,\ldots,\alpha_n)$ of non-negative integers, $|\alpha|$ is the sum $\alpha_1+\alpha_2+\cdots+\alpha_n$ of its components, $\alpha!$ is the product 
$\alpha_1!\alpha_2!\cdots\alpha_n!$, $z$ denotes an $n$-tuple $(z_1,z_2,\cdots,z_n)$ of complex numbers and $z^\alpha$ denotes the product $z_1^{\alpha_1}z_2^{\alpha_2}\cdots z_n^{\alpha_n}$.
Using the standard multi-index notation, we write an operator valued $n$-variable power series
\bea\label{ee2}f(z)=\sum_{\alpha} A_\alpha z^\alpha,\;\;A_\alpha\in\mathcal{B}(\mathcal{H}).\eea
Let $\mathbb{D}^n=\{z\in\mathbb{C}^n: z=(z_1,z_2,\ldots,z_n), |z_j|<1, j=1,2,\ldots,n\}$ be the open unit polydisk. Let $\mathcal{K}_n(\mathcal{H})$ be the largest 
non-negative number such that the $n$-variables power series (\ref{ee2}) converges in $\mathbb{D}^n$ and $\Vert f\Vert_{\mathcal{H}^\infty(\mathbb{D}, \mathcal{B}(\mathcal{H}))}\leq1$. Then  
\bea\label{ee3} \sum_\alpha \Vert A_\alpha\Vert  \left|z^\alpha\right|\leq 1\;\text{for all}\;z\in\mathcal{K}_n(\mathcal{H})\cdot\mathbb{D}^n.\eea 
\begin{defi}\cite{2} A domain $\mathcal{R}\subset\mathbb{C}^n$ is said to be Reinhardt domain centered at $0\in \mathcal{R}$ if for any 
$z=(z_1,z_2,\ldots,z_n)\in\mathcal{R}$, we have that $(z_1e^{i\theta_1},z_2e^{i\theta_2},\ldots, z_ne^{i\theta_n})\in \mathcal{R}$ for each $\theta_k\in[0,2\pi]$ 
$(1\leq k\leq n)$. We say that $\mathcal{R}\subset \mathbb{C}^n$ is a complete Reinhardt domain if for each $z=(z_1, z_2,\ldots, z_n)\in \mathcal{R}$ and for each 
$|u_k|\leq 1$ $(1\leq k\leq n)$, we have that $u.z=(u_1 z_1,u_2 z_2,\ldots,u_n z_n)\in \mathcal{R}$.\end{defi}
\begin{defi}\cite{2} A domain $\Omega\subset\mathbb{C}^n$ is said to be a circular domain centered at $0\in \Omega$ if for any $z=(z_1,z_2,\ldots,z_n)\in \Omega$ and for each $\theta\in[0,2\pi]
$, we have that $e^{i\theta}z=(z_1e^{i\theta},z_2e^{i\theta},\ldots, z_ne^{i\theta})\in \Omega$. We say that $\Omega\subset \mathbb{C}^n$ is a complete 
circular domain at $0\in \Omega$ if for any $z=(z_1,z_2,\ldots,z_n)\in \Omega$ and for each $|u|\leq 1$, we have that $u.z=(u_1z_1,u_2z_2,\ldots,u_nz_n)\in \Omega$.\end{defi}
If $\Omega$ is a complete circular domain centered at $0\in \Omega\subset \mathbb{C}^n$, then every holomorphic function $f$ in $\Omega$ can be expanded into homogeneous polynomials given by
\bea\label{e1} f(z)=\sum_{k=0}^\infty P_k(z) \;\text{for}\;z\in \Omega,\eea
where $P_k(z)$ is a homogeneous polynomial of degree $k$ and $P_0(z)=f(0)$.\\[2mm]
\indent The following questions are expected in such a context.
\begin{ques}\label{Q1} 
Is it possible to establish the analogous results of \textrm{Theorem B} for operator valued functions in $\mathcal{B}(\mathbb{D}, \mathcal{B}(\mathcal{H}))$?
If so, what is the multidimensional analogue of \textrm{Theorem B} for functions in the class $\mathcal{B}(\mathbb{D}, \mathcal{B}(\mathcal{H}))$?\end{ques}
\begin{ques}\label{Q2} 
Is it possible to establish a refined version of \textrm{Theorem B} by employing the operator norm of $f$, {\it i.e.,} $\Vert f\Vert$ for $f\in\mathcal{B}\left(\mathbb{D},\mathcal{B}(\mathcal{H})\right)$?\end{ques}
The primary purpose of this paper is to provide the positive answers to \textrm{Questions \ref{Q1}} and \textrm{\ref{Q2}}.
\section{key lemmas }
The following are key lemmas of this paper and are used to prove the main results. We introduce some notations for $j=1,2$ as follows:
\bea \label{e2}\mathcal{G}_{f}(r):=\sum_{n=1}^\infty \Vert A_n\Vert r^n+\left(\frac{1}{1+\Vert A_1\Vert }+\frac{r}{1-r}\right)\sum_{n=2}^\infty \Vert A_n\Vert ^2r^{2n-1},\\[2mm]
\label{e3} \mathcal{G}_{j,f}(r):=\Vert f(z)\Vert ^j+\sum_{n=1}^\infty \Vert A_n\Vert r^n+\left(\frac{1}{1+\Vert A_1\Vert }+\frac{r}{1-r}\right)\sum_{n=2}^\infty \Vert A_n\Vert ^2r^{2n-1},\\[2mm]
\label{e4}\mathcal{H}_{f}(r):=\sum_{n=1}^\infty \Vert A_n\Vert r^n+\left(\frac{r^{-1}}{1+\Vert A_1\Vert }+\frac{1}{1-r}\right)\sum_{n=1}^\infty \Vert A_n\Vert ^2r^{2n}\;\;\text{and}\\[2mm]
\label{e5}\mathcal{H}_{j,f}(r):=\Vert f(z)\Vert ^j+\sum_{n=1}^\infty \Vert A_n\Vert r^n+\left(\frac{r^{-1}}{1+\Vert A_1\Vert }+\frac{1}{1-r}\right)\sum_{n=1}^\infty \Vert A_n\Vert ^2r^{2n}\eea
for the function $f(z)=\sum\limits_{n=1}^\infty A_nz^n\in\mathcal{B}\left(\mathbb{D},\mathcal{B}(\mathcal{H})\right)$.
\begin{lem}\label{lem1}\cite[counterpart of Schwarz-Pick inequality]{10} Let $B(z)$ be an analytic function with values in $\mathcal{B}(\mathcal{H})$ and satisfying $\Vert B(z)\Vert \leq 1$ on $\mathbb{D}$. Then
\beas (1-|a|)^{n-1}\left\Vert \frac{B^{(n)(a)}}{n!}\right\Vert\leq \frac{\Vert I-B(a)^*B(a)\Vert^{1/2}\Vert I-B(a)B(a)^*\Vert^{1/2}}{1-|a|^2}\eeas 
for each $a\in\mathbb{D}$ and $n\in\mathbb{N}$.\end{lem}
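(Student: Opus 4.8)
The plan is to reduce the inequality to the operator-valued Schwarz--Pick estimate at a single point and then transport it to an arbitrary $a\in\mathbb{D}$ by precomposing $B$ with a disk automorphism. Fix $a\in\mathbb{D}$ and set $\varphi_a(z)=\frac{z+a}{1+\overline{a}z}$, which is an automorphism of $\mathbb{D}$ with $\varphi_a(0)=a$. Put $\psi(z):=B(\varphi_a(z))$ for $z\in\mathbb{D}$; then $\psi$ is analytic on $\mathbb{D}$ with values in $\mathcal{B}(\mathcal{H})$, $\Vert\psi(z)\Vert\le1$, and $\psi(0)=B(a)$. Write $\psi(z)=\sum_{k=0}^\infty C_kz^k$, so that $C_0=B(a)$.

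The one genuinely analytic input is the coefficient estimate
\[
\Vert C_k\Vert\le\Vert I-C_0^*C_0\Vert^{1/2}\,\Vert I-C_0C_0^*\Vert^{1/2}\qquad\text{for every }k\ge1 .
\]
For $k=1$ this is the operator Schwarz--Pick lemma: after replacing $\psi$ by $(1-\epsilon)\psi$ so that the inverses below are legitimate, the operator M\"obius transform $\Phi(z)=(I-C_0C_0^*)^{-1/2}\big(\psi(z)-C_0\big)\big(I-C_0^*\psi(z)\big)^{-1}(I-C_0^*C_0)^{1/2}$ is a $\mathcal{B}(\mathcal{H})$-valued analytic contraction on $\mathbb{D}$ with $\Phi(0)=0$, whence $\Phi(z)=z\Psi(z)$ with $\Vert\Psi\Vert\le1$; since $\Phi'(0)=(I-C_0C_0^*)^{-1/2}C_1(I-C_0^*C_0)^{-1/2}=\Psi(0)$ has norm at most $1$, letting $\epsilon\to0^{+}$ gives the case $k=1$. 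For general $k$ one applies the case $k=1$ to $G(w):=\frac{1}{k}\sum_{j=0}^{k-1}\psi(\omega^{j}w^{1/k})$ with $\omega=e^{2\pi i/k}$: this expression is single-valued, its power series is $\sum_{\ell\ge0}C_{k\ell}w^{\ell}$, it satisfies $\Vert G\Vert\le1$, and $G(0)=C_0$, $G'(0)=C_k$. (This estimate is precisely the Schwarz--Pick-type bound recorded in the cited reference; the sketch is included only for completeness.)

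Next I transport the coefficients back. Since $\varphi_a^{-1}(w)=\frac{w-a}{1-\overline{a}w}$, writing $u=w-a$ gives
\[
\varphi_a^{-1}(w)=\frac{u}{(1-|a|^2)-\overline{a}u}=\sum_{m\ge0}\frac{\overline{a}^{\,m}}{(1-|a|^2)^{m+1}}\,u^{m+1},\qquad\big(\varphi_a^{-1}(w)\big)^{k}=\sum_{m\ge0}\binom{k+m-1}{m}\frac{\overline{a}^{\,m}}{(1-|a|^2)^{k+m}}\,u^{k+m}.
\]
Expanding $B(w)=\sum_{k\ge0}C_k\big(\varphi_a^{-1}(w)\big)^{k}$ about $w=a$ (all series converge absolutely for $w$ near $a$, so rearrangement is permissible) and collecting the coefficient of $u^n$ yields
\[
\frac{B^{(n)}(a)}{n!}=\frac{1}{(1-|a|^2)^n}\sum_{k=1}^{n}\binom{n-1}{k-1}\,\overline{a}^{\,n-k}\,C_k .
\]
Taking norms, inserting the coefficient estimate, and using $\sum_{k=1}^{n}\binom{n-1}{k-1}|a|^{n-k}=(1+|a|)^{n-1}$ together with $\frac{(1+|a|)^{n-1}}{(1-|a|^2)^n}=\frac{1}{(1-|a|)^{n-1}(1-|a|^2)}$, we obtain
\[
(1-|a|)^{n-1}\left\Vert\frac{B^{(n)}(a)}{n!}\right\Vert\le\frac{\Vert I-B(a)^*B(a)\Vert^{1/2}\,\Vert I-B(a)B(a)^*\Vert^{1/2}}{1-|a|^2},
\]
which is the asserted inequality.

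The step I expect to be the main obstacle is the coefficient estimate of the second paragraph, that is, the operator Schwarz--Pick lemma itself: noncommutativity forces the two-sided defect-operator factors $(I-C_0C_0^*)^{\pm1/2}$ and $(I-C_0^*C_0)^{\pm1/2}$, and the boundary case $\Vert C_0\Vert=1$ (where these operators need not be invertible) must be handled by the limiting argument above. Granting that estimate, the remaining steps --- composition with $\varphi_a$, the power-series rearrangement, the binomial identity, and the final norm estimate --- are elementary.
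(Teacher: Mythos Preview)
The paper does not prove this lemma; it is simply quoted from Anderson--Rovnyak \cite{10} and used as a black box (only the special case $a=0$, giving $\Vert A_n\Vert\le\Vert I-|A_0|^2\Vert$, is actually invoked). Your argument is correct and is essentially the proof in that source: transport to the origin via the disk automorphism $\varphi_a$, establish the coefficient bound $\Vert C_k\Vert\le\Vert I-C_0^*C_0\Vert^{1/2}\Vert I-C_0C_0^*\Vert^{1/2}$ (the case $k=1$ via the operator M\"obius transform, the general $k$ by the roots-of-unity averaging), and then translate back through the explicit Taylor expansion of $(\varphi_a^{-1})^k$ together with the binomial identity $\sum_{k=1}^n\binom{n-1}{k-1}|a|^{n-k}=(1+|a|)^{n-1}$. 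The dilation $(1-\epsilon)\psi$ to handle the boundary case $\Vert C_0\Vert=1$ is also the standard device. The one place you leave as a sketch---that the operator M\"obius transform $\Phi$ is itself a contraction on $\mathbb{D}$---is exactly the substantive input from \cite{10}, as you note.
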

Note that for $f\in\mathcal{B}\left(\mathbb{D},\mathcal{B}(\mathcal{H})\right)$ of the form $f(z)=\sum_{n=0}^\infty A_nz^n$ with $A_0=a_0 I$, $|a_0|<1$. Without loss of generality, assume that $\Vert f\Vert_{H^\infty(\mathbb{D}, \mathcal{B}(\mathcal{H}))}\leq 1$. Then, by virtue of the \textrm{Lemma \ref{lem1}} with $a=0$, we get
\bea\label{s1} \Vert A_n\Vert \leq \Vert I-|A_0|^2\Vert=1-|a_0|^2\quad\text{for}\quad n\in\mathbb{N}.\eea
\begin{lem}\label{lem2}\cite{8} Let $f : \mathbb{D}\to\mathcal{B}(\mathcal{H})$ be an operator valued bounded holomorphic function with the expansion $f(z)=\sum\limits_{n=0}^\infty A_nz^n$ such that $A_k\in\mathcal{B}(\mathcal{H})$ for all $k\in\mathbb{N}\cup\{0\}$ and $A_0=a_0 I$, $|a_0|<1$. If $\Vert f(z)\Vert \leq 1$ in $\mathbb{D}$, then for $z\in\mathbb{D}$, we have \beas \Vert f(z)\Vert \leq \frac{\Vert f(0)\Vert +|z|}{1+\Vert f(0)\Vert |z|}.\eeas\end{lem}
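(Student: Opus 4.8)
\emph{Proof proposal.} The plan is to push the operator inequality down to the classical scalar Schwarz--Pick lemma by pairing $f$ with unit vectors. Fix $z\in\mathbb{D}$ and arbitrary unit vectors $x,y\in\mathcal{H}$, and set $\varphi(w):=\la f(w)x,y\ra$ for $w\in\mathbb{D}$. Since $f$ is holomorphic with values in $\mathcal{B}(\mathcal{H})$, $\varphi$ is an ordinary scalar holomorphic function on $\mathbb{D}$, and $\Vert f(w)\Vert\le 1$ gives $|\varphi(w)|\le\Vert f(w)x\Vert\,\Vert y\Vert\le\Vert f(w)\Vert\le 1$, so $\varphi$ maps $\mathbb{D}$ into $\overline{\mathbb{D}}$. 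If $\varphi$ is constant the estimate for $\varphi$ is immediate (if $|\varphi|\equiv 1$ both sides equal $1$; if $|\varphi|\equiv\rho<1$ one uses $(\rho+|z|)/(1+\rho|z|)\ge\rho$ on $[0,1)$), so we may assume $\varphi$ is nonconstant, in which case the maximum modulus principle forces $\varphi(\mathbb{D})\subset\mathbb{D}$ and Schwarz--Pick applies.

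Applying the Schwarz--Pick lemma at the origin, with $a:=\varphi(0)$, gives
\[
\left|\frac{\varphi(z)-a}{1-\overline{a}\,\varphi(z)}\right|\le|z|.
\]
Squaring this and writing $t:=|a|$, $r:=|z|$, the worst case $\mathrm{Re}(\overline{a}\varphi(z))=t|\varphi(z)|$ reduces it to a quadratic inequality in $|\varphi(z)|$ whose discriminant turns out to be the perfect square $r^2(1-t^2)^2$; solving it yields the sharp pointwise bound
\[
|\varphi(z)|\le\frac{|a|+|z|}{1+|a|\,|z|}=\frac{|\varphi(0)|+|z|}{1+|\varphi(0)|\,|z|}
\]
(equivalently, one may simply quote this sharp form of Schwarz--Pick). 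Since $\lambda\mapsto(\lambda+|z|)/(1+\lambda|z|)$ has derivative $(1-|z|^2)/(1+\lambda|z|)^2>0$ and is thus nondecreasing on $[0,1)$, while $|\varphi(0)|=|\la f(0)x,y\ra|\le\Vert f(0)\Vert$, we conclude
\[
|\la f(z)x,y\ra|\le\frac{\Vert f(0)\Vert+|z|}{1+\Vert f(0)\Vert\,|z|}.
\]

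Taking the supremum over all unit vectors $x,y\in\mathcal{H}$ and using $\Vert f(z)\Vert=\sup\{|\la f(z)x,y\ra|:\Vert x\Vert=\Vert y\Vert=1\}$ gives the assertion. (The hypothesis $A_0=a_0I$ plays no role here beyond identifying $\Vert f(0)\Vert=|a_0|$; an equivalent route would be to apply an operator Schwarz lemma to the operator Möbius transform $(I-\overline{a_0}f(z))^{-1}(f(z)-a_0I)$, but that first requires verifying that such transforms preserve the operator unit ball, which the scalar reduction above avoids.) The one step that is more than bookkeeping is extracting the sharp radial bound with denominator $1+|a|\,|z|$ from the invariant form of Schwarz--Pick: the naive triangle-inequality estimate only produces the weaker denominator $1-|a|\,|z|$, so one must genuinely exploit the perfect-square structure of the discriminant (equivalently, the explicit description of hyperbolic disks).
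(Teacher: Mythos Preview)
The paper does not prove this lemma at all: it is quoted verbatim from \cite{8} and used as a black box, so there is no in-paper argument to compare against. Your argument is correct and self-contained. The reduction to scalar Schwarz--Pick via $\varphi(w)=\la f(w)x,y\ra$ works exactly as you describe, the discriminant computation yielding the sharp bound $|\varphi(z)|\le(|\varphi(0)|+|z|)/(1+|\varphi(0)|\,|z|)$ is right (indeed $r^2(1-t^2)^2$), and the monotonicity step that trades $|\varphi(0)|$ for $\Vert f(0)\Vert$ is the point that makes the supremum recover the operator norm bound. Your observation that the hypothesis $A_0=a_0I$ is not used in this particular estimate is also accurate; in the paper that hypothesis is carried because it is essential for the coefficient bounds coming from \textrm{Lemma \ref{lem1}} (equation~(\ref{s1})), not for this growth estimate.
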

\begin{lem}\label{lem21}\cite{18A}\cite[Lemma B]{30} Suppose that $f\in\mathcal{B}(\mathbb{D},\mathbb{C})$ and $f(z) = \sum_{k=0}^\infty a_kz^k$. Then the following inequalities hold.
\item[(a)] $\vert a_{2k+1}\vert\leq 1-\vert a_0\vert^2-\cdots-\vert a_k\vert^2$, $k=0,1,\ldots$
\item[(b)] $\vert a_{2k}\vert\leq 1-\vert a_0\vert^2-\cdots-\vert a_{k-1}\vert^2-\frac{\vert a_k\vert^2}{1+\vert a_0\vert}$, $k=1,2,\ldots$.
\end{lem}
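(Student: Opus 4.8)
The plan is to read the two bounds as coefficient estimates for the Schur class $\mathcal{B}(\mathbb{D},\mathbb{C})$ and to prove them jointly by induction on $k$ using the Schur (Schwarz--Pick) algorithm. If $|a_0|=1$ then $f\equiv a_0$ and both bounds are trivial, so assume $|a_0|<1$. The engine is that
\[
f_1(z):=\frac{1}{z}\cdot\frac{f(z)-a_0}{1-\overline{a_0}\,f(z)}
\]
again belongs to $\mathcal{B}(\mathbb{D},\mathbb{C})$; writing $f_1(z)=\sum_{n\ge 0}b_nz^n$ and clearing denominators in the identity $f(z)\bigl(1+\overline{a_0}\,zf_1(z)\bigr)=a_0+zf_1(z)$ yields the coefficient recursion
\[
a_n=(1-|a_0|^2)\,b_{n-1}-\overline{a_0}\sum_{j=1}^{n-1}a_{n-j}\,b_{j-1}\qquad(n\ge 1),
\]
in particular $a_1=(1-|a_0|^2)b_0$. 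Part (a) with $k=0$ is then immediate from $|b_0|\le 1$.

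The inductive mechanism exploits that the Schur step lowers the index by one: $a_{2k}$ is governed by $b_{2k-1}$, the coefficient of index $2(k-1)+1$ of $f_1$, and $a_{2k+1}$ is governed by $b_{2k}$, the coefficient of the even index $2k$ of $f_1$. I would therefore run the chain
\[
\text{(a)}_{0}\ \Longrightarrow\ \text{(b)}_{1}\ \Longrightarrow\ \text{(a)}_{1}\ \Longrightarrow\ \text{(b)}_{2}\ \Longrightarrow\ \text{(a)}_{2}\ \Longrightarrow\ \cdots,
\]
proving $\text{(b)}_k$ by applying the already-known $\text{(a)}_{k-1}$ to $f_1$ and pushing the estimate back through the recursion, and proving $\text{(a)}_k$ by applying $\text{(b)}_k$ to $f_1$ in the same way. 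The first instance already displays the whole pattern: the recursion gives $a_2=(1-|a_0|^2)\bigl(b_1-\overline{a_0}b_0^{\,2}\bigr)$, so the triangle inequality together with $|b_1|\le 1-|b_0|^2$ (which is $\text{(a)}_0$ for $f_1$) gives
\[
|a_2|\le(1-|a_0|^2)\bigl(1-|b_0|^2+|a_0|\,|b_0|^2\bigr)=(1-|a_0|^2)\bigl(1-(1-|a_0|)|b_0|^2\bigr),
\]
and inserting $|b_0|^2=|a_1|^2/(1-|a_0|^2)^2$ together with $(1-|a_0|^2)/(1+|a_0|)=1-|a_0|$ turns the right-hand side into exactly $1-|a_0|^2-|a_1|^2/(1+|a_0|)$, which is $\text{(b)}_1$.

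The step I expect to be the main obstacle is the ``push-back'' for general $k$. The recursion writes $a_{2k}$ (resp.\ $a_{2k+1}$) as $(1-|a_0|^2)b_{2k-1}$ (resp.\ $(1-|a_0|^2)b_{2k}$) plus a nonlinear tail assembled from $a_1,a_2,\dots$ and $b_0,b_1,\dots$; these tail terms do \emph{not} add in modulus, and a crude triangle-inequality bound is genuinely too lossy (it already fails even for $\text{(a)}_1$, as a short computation shows). One must instead group the tail so that, after substituting the inductive bound for the leading coefficient of $f_1$ and re-expressing each $a_m=(1-|a_0|^2)b_{m-1}+(\text{lower-order terms})$, the right-hand side collapses to precisely $1-\sum_{j=0}^{k-1}|a_j|^2-|a_k|^2/(1+|a_0|)$ in case (b) and to $1-\sum_{j=0}^{k}|a_j|^2$ in case (a); the grouping is dictated by matching these target constants, exactly as in the $k=1$ computation above. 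Finally, sharpness comes from tracing equality up the algorithm: equality at any stage forces the successive $f_1,f_2,\dots$ to be a unimodular constant or a single Blaschke factor, so the extremal functions are explicit finite Blaschke products, and these realize the constants $1$ and $1/(1+|a_0|)$, showing that neither can be enlarged.
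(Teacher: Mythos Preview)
The paper does not prove this lemma; it is quoted verbatim from Carlson \cite{18A} (restated as \cite[Lemma~B]{30}) and used as a black box, so there is no in-paper argument to compare against.

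Your route via the Schur transform is the classical one, and your computations for $\text{(a)}_0$ and $\text{(b)}_1$ are correct. You are also right that a crude triangle-inequality bound on the convolution tail is genuinely too weak already at $\text{(a)}_1$.

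The gap is that you never actually perform the inductive step. The sentence ``the grouping is dictated by matching these target constants, exactly as in the $k=1$ computation above'' does not constitute a proof: the $(b)_1$ case involved \emph{no} grouping---it was a one-line substitution of $\text{(a)}_0$ for $f_1$ followed by an exact algebraic identity---so it furnishes no template for general $k$. For $n=2k+1$ the recursion
\[
a_n=(1-|a_0|^2)\,b_{n-1}-\overline{a_0}\sum_{m=1}^{n-1}a_m\,b_{n-1-m}
\]
mixes all of $a_1,\dots,a_{2k}$ with all of $b_0,\dots,b_{2k-1}$, and closing the induction requires either (i) strengthening the inductive hypothesis so that the cross-terms are controlled simultaneously, or (ii) an explicit identity expressing $1-\sum_{j\le k}|a_j|^2$ through the Schur parameters so that the cancellation becomes visible. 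Neither is provided, and neither is routine. As written, your argument establishes only the cases $k\le 1$; to turn the plan into a proof you must make the general step explicit (or cite Carlson for it, as the paper does).
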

To prove \textrm{Lemma \ref{lem22}}, we need the following result. 
\begin{rem} We know that, if $T$ is a bounded linear operator on a Hilbert space $X$, then the following conditions are equivalent:
\item[(I)] $T^* T=I$;
\item[(II)] $\langle T x, T y\rangle=\langle x, y\rangle \quad\text{for all}\quad x \quad\text{and}\quad y$;
\item[(III)] $\Vert Tx\Vert=\Vert x\Vert\quad\text{for all}\quad x$.
\end{rem}
In order to prove our main results, we need to establish the analogous result of \textrm{Lemma \ref{lem21}} for functions $f$ belonging to the class $\mathcal{B}\left(\mathbb{D},\mathcal{B}(\mathcal{H})\right)$.
\begin{lem}\label{lem22} Suppose that $f\in\mathcal{B}\left(\mathbb{D},\mathcal{B}(\mathcal{H})\right)$ with the expansion $f(z)=\sum\limits_{n=0}^\infty A_nz^n$ in $\mathbb{D}$ with $A_0=a_0 I$, $|a_0|<1$ such 
that $A_n\in\mathcal{B}(\mathcal{H})$ for all $n\in\mathbb{N}\cup\{0\}$. Then the following inequalities hold.
\item[(a)] $\Vert A_{2k+1}\Vert\leq 1-\Vert A_0\Vert^2-\cdots-\Vert A_k\Vert^2$,\quad $k=0,1,\ldots$
\item[(b)] $\Vert A_{2k}\Vert\leq 1-\Vert A_0\Vert^2-\cdots-\Vert A_{k-1}\Vert^2-\frac{\Vert A_k\Vert^2}{1+\Vert A_0\Vert}$, \quad$k=1,2,\ldots$\end{lem}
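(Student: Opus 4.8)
The plan is to imitate the proof of the scalar counterpart Lemma~\ref{lem21}, carrying the Schur algorithm over to the operator setting and then inducting on $k$. Since $A_0=a_0I$ is a scalar multiple of the identity (hence central), the operator Möbius transform
\[
\Phi(z):=\bigl(f(z)-a_0I\bigr)\bigl(I-\overline{a_0}\,f(z)\bigr)^{-1}
\]
is a well-defined holomorphic map $\mathbb{D}\to\mathcal{B}(\mathcal{H})$ with $\Phi(0)=0$: the inverse exists because $\Vert\overline{a_0}f(z)\Vert\le|a_0|<1$, and $\Vert\Phi(z)\Vert\le1$ on $\mathbb{D}$ follows from the identity
\[
I-\Phi(z)^*\Phi(z)=(1-|a_0|^2)\,\bigl(I-\overline{a_0}f(z)\bigr)^{-*}\bigl(I-f(z)^*f(z)\bigr)\bigl(I-\overline{a_0}f(z)\bigr)^{-1}\succeq0 .
\]
Applying Lemma~\ref{lem2} to $\Phi$ (for which $\Phi(0)=0$) gives $\Vert\Phi(z)\Vert\le|z|$, so $\Phi(z)=z\,g(z)$ with $g\in\mathcal{B}(\mathbb{D},\mathcal{B}(\mathcal{H}))$, and inverting the Möbius map yields $f(z)=\bigl(I+\overline{a_0}z\,g(z)\bigr)^{-1}\bigl(a_0I+z\,g(z)\bigr)$, that is,
\[
f(z)=a_0I+(1-|a_0|^2)\sum_{m=1}^{\infty}(-\overline{a_0})^{m-1}\,z^m g(z)^m .
\]
Writing $g(z)=\sum_{n\ge0}B_nz^n$ one reads off $A_1=(1-|a_0|^2)B_0$, $A_2=(1-|a_0|^2)(B_1-\overline{a_0}B_0^2)$, $A_3=(1-|a_0|^2)\bigl(B_2-\overline{a_0}(B_0B_1+B_1B_0)+\overline{a_0}^{2}B_0^3\bigr)$, and in general a fixed noncommutative polynomial identity $A_n=(1-|a_0|^2)\,Q_n(\overline{a_0};B_0,\dots,B_{n-1})$ whose ``leading'' term is $B_{n-1}$; since $\Vert B_0\Vert\le1$ this already gives $\Vert A_1\Vert\le1-|a_0|^2=1-\Vert A_0\Vert^2$, which is part (a) for $k=0$.

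For the general case I would induct on $k$, proving (a) and (b) together. Applying the inductive hypothesis to the tail function $g$ in place of $f$, one would substitute the resulting bounds on $\Vert B_j\Vert$ into the coefficient identity for $A_{2k+1}$ (respectively $A_{2k}$), group terms by the triangle inequality, and simplify using $(1-|a_0|^2)/(1+|a_0|)=1-|a_0|$ together with the same telescoping cancellations that close the scalar computation; Lemma~\ref{lem1} supplies the coefficient bounds $\Vert B_j\Vert\le1$ and, at $a=0$, the refined Schwarz--Pick estimates for $g$, while Lemma~\ref{lem2} controls $\Vert f(z)\Vert$ and $\Vert\Phi(z)\Vert$ wherever a pointwise estimate is needed. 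When every inequality used is an equality --- which is where the Remark preceding Lemma~\ref{lem22} enters --- the relevant $B_j$, or partial products of them, become isometries or co-isometries, and tracing this back shows the extremal functions are scalar finite Blaschke products times $I$, giving sharpness exactly as in the scalar case.

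The step I expect to be the main obstacle is precisely this inductive passage to the Schur iterate $g$. In the scalar proof one freely re-applies Lemma~\ref{lem21} to $g$, but here $g(0)=B_0$ is a \emph{general} operator, not a scalar multiple of the identity, so Lemma~\ref{lem22} is \emph{not} directly available for $g$ and one cannot simply recurse. The induction therefore has to be reformulated so that the only facts used about the tail are ones that survive the loss of the normalization ``$B_0=\text{scalar}\cdot I$'' --- essentially the operator Parseval inequality $\sum_{n\ge0}B_nB_n^*\preceq I$, the trivial bound $\Vert B_n\Vert\le1$, and Lemma~\ref{lem1} --- and one must then check that this weaker information still forces the sharp constants on the right-hand side, i.e. that the extremal tail is again the ``scalar times $I$'' one. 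Verifying this is the delicate point of the argument; a convenient preliminary reduction is to pass first to finite-dimensional compressions $Pf(z)P$, which preserve both $f(0)=a_0I_{P\mathcal{H}}$ and, up to an arbitrarily small error, the norms $\Vert A_j\Vert$, and to carry out the estimates for $\mathcal{B}(P\mathcal{H})$-valued functions, where the noncommutative algebra in $Q_n$ is more transparent.
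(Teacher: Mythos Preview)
Your route is entirely different from the paper's. The paper does not attempt an operator Schur algorithm at all: after recording $\Vert A_n\Vert\le 1-|a_0|^2$ from Lemma~\ref{lem1}, it simply declares ``without loss of generality, let $A_n=a_nI$'' for \emph{all} $n$, so that $f(z)=g(z)I$ with $g$ scalar, and then quotes Lemma~\ref{lem21} verbatim. Your Schur/induction argument is therefore not a variant of the paper's proof but a genuinely independent attack.

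The obstacle you flag --- that after one Schur step $g(0)=B_0$ need no longer be a scalar multiple of $I$, so the inductive hypothesis is unavailable --- is real and, in fact, fatal: the lemma as stated is false. Take $\mathcal H=\mathbb{C}^2$ and
\[
f(z)=\begin{pmatrix} z & 0\\ 0 & z^2\end{pmatrix}.
\]
Then $A_0=0=0\cdot I$, $\Vert f(z)\Vert=|z|\le 1$ on $\mathbb{D}$, $\Vert A_1\Vert=\Vert A_2\Vert=1$, and part~(b) with $k=1$ would force $\Vert A_2\Vert\le 1-\Vert A_0\Vert^2-\Vert A_1\Vert^2/(1+\Vert A_0\Vert)=0$, a contradiction; replacing $z^2$ by $z^3$ similarly violates part~(a) at $k=1$. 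Thus the paper's ``without loss of generality'' is not a loss-free reduction, and no reformulation of your induction (operator Parseval, finite-dimensional compressions, etc.) can close the gap without adding hypotheses --- for instance, that \emph{all} $A_n$ are scalar multiples of $I$, which is effectively what the paper silently assumes.
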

\begin{proof} Since $f\in\mathcal{B}\left(\mathbb{D},\mathcal{B}(\mathcal{H})\right)$ of the form $f(z)=\sum_{n=0}^\infty A_nz^n$ with $A_0=a_0 I$, $|a_0|<1$. From (\ref{s1}), we have $\Vert A_n\Vert\leq1-|a_0|^2<1$ for $z\in\mathbb{D}$ and $n\in\mathbb{N}$.
Without loss of generality, let $A_n=a_n I$, where $a_n\in\mathbb{C}$ with $|a_n|<1$ for all $n\in\mathbb{N}\cup\{0\}$. Let us define a function $g(z)=\sum_{k=0}^\infty a_kz^k$. Thus, we have
\beas \Vert f(z)\Vert=\left\Vert I\left(\sum_{n=0}^\infty a_nz^n\right)\right\Vert=\left|\sum_{n=0}^\infty a_nz^n\right|=|g(z)|.\eeas
Since $\Vert f(z)\Vert \leq 1$, we have $|g(z)|\leq 1$ and hence $g\in\mathcal{B}(\mathbb{D},\mathbb{C})$. In view of \textrm{Lemma \ref{lem21}}, 
we get the desired inequalities. 
\end{proof}
The following result provides an operator valued analogue of \textrm{Theorem C (a)}.
\begin{lem}\label{lem3} Suppose that $f\in\mathcal{B}\left(\mathbb{D},\mathcal{B}(\mathcal{H})\right)$ with the expansion $f(z)=\sum\limits_{n=1}^\infty A_nz^n$ in $\mathbb{D}$ such that 
$A_n\in\mathcal{B}(\mathcal{H})$ for all $n\in\mathbb{N}$. Then $\mathcal{G}_{f}(r)\leq 1$ for $|z|=r\leq 3/5$, where $\mathcal{G}_{f}(r)$ is given in (\ref{e2}). The constant 
$3/5$ is the best possible.\end{lem}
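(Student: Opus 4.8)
The strategy is to reduce the inequality to the scalar result Theorem~C~(a): $\mathcal{G}_f(r)$ depends on $f$ only through the operator norms $\Vert A_n\Vert$, and by Lemma~\ref{lem22} these obey exactly the constraints that the moduli $|a_k|$ obey there. Since $f(z)=\sum_{n\ge1}A_nz^n$ vanishes at the origin, applying the scalar Schwarz lemma to each $z\mapsto\langle f(z)v,u\rangle$ (for unit vectors $u,v$) and passing to suprema shows $\Vert f(z)\Vert\le|z|$ on $\mathbb{D}$; hence $g(z):=z^{-1}f(z)=\sum_{n\ge0}B_nz^{n}$ with $B_n:=A_{n+1}$ again belongs to $\mathcal{B}(\mathbb{D},\mathcal{B}(\mathcal{H}))$ and $g(0)=B_0=A_1$. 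Writing $a:=\Vert A_1\Vert\in[0,1]$, a reindexing gives the identity
\[
\mathcal{G}_f(r)=r\left[\sum_{n=0}^{\infty}\Vert B_n\Vert\,r^{n}+\left(\frac1{1+a}+\frac{r}{1-r}\right)\sum_{n=1}^{\infty}\Vert B_n\Vert^{2}r^{2n}\right],
\]
so it suffices to bound the bracketed expression --- the left-hand side of Theorem~B written for $g$ --- by $1/r$.

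By Lemma~\ref{lem22} (applicable to $g$ once its coefficients are taken to be scalar multiples of $I$, as in the proof of that lemma) we have $\Vert B_n\Vert\le 1-a^{2}$ for all $n\ge1$ together with $\sum_{n\ge1}\Vert B_n\Vert^{2}\le 1-a^{2}$, which are precisely the constraints used in the proof of Theorem~C~(a). Following that proof: the inequalities $\Vert B_n\Vert^{2}\le(1-a^{2})\Vert B_n\Vert$ and $\Vert B_n\Vert^{2}/(1+a)\le(1-a)\Vert B_n\Vert$ let one absorb $\sum_{n\ge1}\Vert B_n\Vert^{2}r^{2n}$ into a multiple of $\sum_{n\ge1}\Vert B_n\Vert r^{n}$ whose coefficient depends only on $a$ and $r$; the latter sum is then estimated by Cauchy--Schwarz against $\sum_{n\ge1}\Vert B_n\Vert^{2}\le 1-a^{2}$, retaining the leading term(s) separately and distinguishing the ranges $a\le r$ and $a>r$ according to whether the $\ell^{2}$-bound or the pointwise bound $\Vert B_n\Vert\le 1-a^{2}$ is binding. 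This reduces $\mathcal{G}_f(r)\le1$ to a single inequality in the real variable $a\in[0,1]$, which one verifies at $r=3/5$; since every term of $\mathcal{G}_f(r)$ is nondecreasing in $r$, the inequality then holds for all $r\le 3/5$.

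For sharpness, take $f_a(z)=\dfrac{z(a-z)}{1-az}\,I\in\mathcal{B}(\mathbb{D},\mathcal{B}(\mathcal{H}))$ with $0\le a<1$. Then $\Vert A_1\Vert=a$ and $\Vert A_n\Vert=(1-a^{2})a^{\,n-2}$ for $n\ge2$, so summing geometric series,
\[
\mathcal{G}_{f_a}(r)=ar+\frac{(1-a^{2})r^{2}}{1-ar}+\left(\frac1{1+a}+\frac{r}{1-r}\right)\frac{(1-a^{2})^{2}r^{3}}{1-a^{2}r^{2}}.
\]
Each summand increases with $r$, and a direct computation shows $\max_{a\in[0,1]}\mathcal{G}_{f_a}(3/5)=1$, attained at an interior $a^{*}\in(0,1)$; consequently $\mathcal{G}_{f_{a^{*}}}(r)>1$ for every $r>3/5$, so $3/5$ cannot be replaced by a larger number.

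The one point requiring care is the one-variable estimate: the crude bounds --- using $\Vert A_n\Vert\le 1-a^{2}$ for \emph{every} $n$, or applying Cauchy--Schwarz to all of $\sum_{n\ge2}\Vert A_n\Vert r^{n}$ --- already overshoot $1$ for intermediate values of $a$, so one must use the $\ell^{2}$-constraint and the pointwise bound together (and the extra factor $r$ in the displayed identity), and it is precisely this balance that forces the constant $3/5$. Everything operator-theoretic (the Schwarz lemma, Lemmas~\ref{lem1} and~\ref{lem22}) is used only to recover these scalar coefficient inequalities.
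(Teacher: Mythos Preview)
Your setup and sharpness argument are correct and match the paper: writing $f(z)=zg(z)$ with $B_n=A_{n+1}$, the reindexing identity you display is right, and your extremal family $f_a=\Psi_a$ is exactly the paper's, with the same simplification $\mathcal{G}_{\Psi_a}(r)=\dfrac{ar(1-r)+(1-a^2)r^2}{1-r}$ (whose maximum over $a$ equals $1$ precisely at $r=3/5$, at $a=(1-r)/(2r)=1/3$).

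The gap is in the main estimate. You claim that the constraints ``$\Vert B_n\Vert\le 1-a^2$ and $\sum_{n\ge1}\Vert B_n\Vert^2\le 1-a^2$'' are ``precisely the constraints used in the proof of Theorem~C~(a)'' and then sketch a Cauchy--Schwarz/casework argument. Neither assertion is accurate: the proof (as carried out in the paper for the operator case) uses the \emph{full} inequalities of Lemma~\ref{lem22}, namely $\Vert B_{2k+1}\Vert\le 1-\sum_{j\le k}\Vert B_j\Vert^2$ and $\Vert B_{2k}\Vert\le 1-\sum_{j<k}\Vert B_j\Vert^2-\Vert B_k\Vert^2/(1+\Vert B_0\Vert)$, split $\sum_{n\ge1}\Vert B_n\Vert r^n$ into even and odd parts, and telescope. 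This produces the key refined bound
\[
\sum_{n\ge1}\Vert A_n\Vert r^n\;\le\;ar+\frac{(1-a^2)r^2}{1-r}-\Bigl(\frac{1}{1+a}+\frac{r}{1-r}\Bigr)\sum_{n\ge2}\Vert A_n\Vert^2 r^{2n-1},
\]
in which the negative correction \emph{exactly cancels} the extra squared sum in $\mathcal{G}_f(r)$, leaving the clean one-variable bound $\mathcal{G}_f(r)\le ar+(1-a^2)r^2/(1-r)$, whose maximum over $a\in[0,1)$ is $\dfrac{5r^2-2r+1}{4(1-r)}\le1$ iff $r\le 3/5$. Your proposed route---absorbing $\sum\Vert B_n\Vert^2 r^{2n}$ into $\sum\Vert B_n\Vert r^n$ via $\Vert B_n\Vert^2\le(1-a^2)\Vert B_n\Vert$, then Cauchy--Schwarz on the linear sum---does not recover this cancellation; using only the pointwise and $\ell^2$ bounds one obtains an upper bound strictly larger than $ar+(1-a^2)r^2/(1-r)$ for generic $a$, and you yourself note that ``the crude bounds already overshoot $1$''. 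The missing idea is the telescoping use of Lemma~\ref{lem22}(a),(b) that makes the two sums interact, not a separate estimation of each.
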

\begin{proof}
The function $f$ can be written as $f(z)=zg(z)$, where $g\in\mathcal{B}\left(\mathbb{D},\mathcal{B}(\mathcal{H})\right)$ with the expansion 
$g(z)=\sum\limits_{n=0}^\infty B_nz^n$ such that $B_n\equiv A_{n+1}\in\mathcal{B}(\mathcal{H})$ for all $n\in\mathbb{N}\cup\{0\}$ and $\Vert g(z)\Vert \leq 1$ for $z\in\mathbb{D}$.
Let $g(0)=b_0 I$, where $b_0\in\mathbb{C}$. Since $\Vert g(z)\Vert \leq 1$ for $z\in\mathbb{D}$, so $|b_0|<1$.
In view of (\ref{s1}), we have
\bea\label{r1}&& \Vert B_n\Vert \leq \left\Vert I-\vert B_0\vert ^2\right\Vert \quad\text{for }\quad n\in\mathbb{N}.\eea
Let $\Vert A_1\Vert=\Vert B_0\Vert=|b_0|=a\in[0,1)$, then we have 
\bea\label{r3}\sum_{n=1}^\infty \Vert A_n\Vert r^n=\sum_{n=0}^\infty \Vert B_n\Vert r^{n+1}=r\left(\Vert B_0\Vert+\sum_{n=1}^\infty \Vert B_{2n}\Vert r^{2n}+\sum_{n=0}^\infty \Vert B_{2n+1}\Vert r^{2n+1}\right).\eea
In view of \textrm{Lemma \ref{lem22}} and the inequality (\ref{r1}), we have 
\bea\label{s2}&&\sum_{n=1}^\infty \Vert B_{2n}\Vert r^{2n}+\sum_{n=0}^\infty \Vert B_{2n+1}\Vert r^{2n+1}\nonumber\\[2mm]
&&\leq \sum_{n=1}^\infty\left(1-\sum_{j=0}^{n-1}\Vert B_j\Vert^2-\frac{\Vert B_n\Vert^2}{1+\Vert B_0\Vert}\right)r^{2n}+\sum_{n=0}^\infty\left(1-\sum_{j=0}^{n-1}\Vert B_j\Vert^2-\Vert B_n\Vert^2\right)r^{2n+1}\nonumber\\[2mm]
&&=\sum_{n=1}^\infty r^n-\Vert B_0\Vert ^2\sum_{n=1}^\infty r^n-\frac{1}{1+\Vert B_0\Vert}\sum_{n=1}^\infty \Vert B_n\Vert^2 r^{2n}\nonumber\\[2mm]
&&-\left(\sum_{n=1}^\infty \left(\sum_{j=1}^{n-1}\Vert B_j\Vert^2\right) r^{2n}+\sum_{n=0}^\infty \left(\sum_{j=1}^n\Vert B_j\Vert^2\right) r^{2n+1}\right).\eea
It is easy to see that
\bea\label{s3}&&\sum_{n=1}^\infty \left(\sum_{j=1}^{n-1}\Vert B_j\Vert^2\right) r^{2n}+\sum_{n=0}^\infty \left(\sum_{j=1}^n\Vert B_j\Vert^2\right) r^{2n+1}\nonumber\\[2mm]
&&=\Vert B_1\Vert^2\left(\sum_{k=3}^\infty r^k\right)+\Vert B_2\Vert^2\left(\sum_{k=5}^\infty r^k\right)+\Vert B_3\Vert^2\left(\sum_{k=7}^\infty r^k\right)+\cdots\nonumber\\[2mm]
&&=\frac{\Vert B_1\Vert^2 r^3}{1-r}+\frac{\Vert B_2\Vert^2 r^5}{1-r}+\frac{\Vert B_3\Vert^2 r^7}{1-r}+\cdots=\frac{r}{1-r}\sum_{j=1}^\infty \Vert B_j\Vert^2 r^{2j}.\eea
From (\ref{r3}), (\ref{s2}) and (\ref{s3}), we have
\bea\label{s4}\sum_{n=1}^\infty \Vert A_n\Vert r^n&\leq &ra+(1-a^2)\frac{r^2}{1-r}-\left(\frac{1}{1+a}+\frac{r}{1-r}\right)\sum_{n=1}^\infty \Vert B_n\Vert^2 r^{2n+1}\nonumber\\[2mm]
&=&ra+(1-a^2)\frac{r^2}{1-r}-\left(\frac{1}{1+a}+\frac{r}{1-r}\right)\sum_{n=2}^\infty \Vert A_n\Vert^2 r^{2n-1}.\eea
As $a\in[0,1)$, from (\ref{s4}), we have 
\bea\label{Ra4} \mathcal{G}_{f}(r)&=&\sum_{n=1}^\infty \Vert A_n\Vert r^n+\left(\frac{1}{1+\Vert A_1\Vert }+\frac{r}{1-r}\right)\sum_{n=2}^\infty \Vert A_n\Vert ^2r^{2n-1}\nonumber\\
&\leq&ar+\frac{r^2(1-a^2)}{1-r}=G_1(a,r),\eea
where $G_1(a,r)=ar+(1-a^2)G_2(r)$ with $G_2(r)=r^2/(1-r)\geq 0$.
We claim that $G_1(a,r)\leq 1$ for $r\leq3/5$ and $a\in[0,1)$. 
Differentiating partially $G_1(a,r)$ twice with respect to $a$, we get
\beas\frac{\pa }{\pa a}G_1(a,r)=r-2aG_2(r)\quad\text{and}\quad\frac{\pa^2 }{\pa a^2}G_1(a,r)=-2G_2(r)\leq 0.\eeas
It is clear that $G_1(a,r)$ has a unique critical point at $r/(2G_2(r))$, {\it i.e.}, $(1-r)/(2r)\in[0,1)$ when $r>1/3$ and hence, we have
\beas G_1(a,r)\leq G_1\left(\frac{1-r}{2r},r\right)=\frac{5r^2-2r+1}{4(1-r)}\leq 1\quad\text{for}\quad r\leq 3/5.\eeas
This proves the desired inequality.\\[2mm]
\indent In order to show the sharpness of the constant $3/5$, we consider the function 
\bea\label{s5} \Psi_a(z)=\frac{z(a-z)}{(1-az)}I=\sum_{n=1}^\infty A_nz^n,\eea 
where $A_1=aI$ and $A_n=-(1-a^2)a^{n-2}I$ and for some $a\in[0,1)$. 
By using the function $\Psi_a(z)$, a simple calculation shows that
\beas \mathcal{G}_{\Psi_a}(r)&=& \sum_{n=1}^\infty \Vert A_n\Vert r^n+\left(\frac{1}{1+\Vert A_1\Vert}+\frac{r}{1-r}\right)\sum_{n=2}^\infty \Vert A_n\Vert^2r^{2n-1}\\[2mm]
&=&ar+(1-a^2) \sum_{n=2}^\infty a^{n-2}r^n+(1-a^2)^2\left(\frac{1}{1+a}+\frac{r}{1-r}\right)\sum_{n=2}^\infty a^{2(n-2)}r^{2n-1}\\[2mm]
&=&ar+(1-a^2) \frac{r^2}{1-ar}+\frac{(1-a^2)(1 -a)r^3}{(1-r)(1-ar)}\\[2mm]
&=&\frac{ar(1-r)+(1-a^2)r^2}{(1-r)}=\frac{G_3(a,r)}{(1-r)},\eeas
where $G_3(a,r)=ar(1-r)+(1-a^2)r^2$. 
Comparing this expression with the right-hand side of the expression in formula (\ref{Ra4}) gives the claimed 
sharpness. In order to verify the aforementioned assertion, it is necessary to demonstrate that $G_3(a,r)/(1-r)>1$ for any $r>3/5$ and an appropriate number $a\in(0, 1)$. Now, we have 
\beas \frac{\pa }{\pa a}G_3(a,r)=r(1-r)-2ar^2\geq 0\quad\text{for}\quad a\leq \frac{1-r}{2r}.\eeas
It is easy to see that $(1-r)/(2r)\in(0,1)$ for $r>1/3$ and 
\beas G_3\left(\frac{1-r}{2r},r\right)=\frac{(1-r)^2}{2}+\left(1-\frac{(1-r)^2}{4r^2}\right)r^2=\frac{(1-r)^2+4r^2}{4}>1-r\eeas
for $r>3/5$, which shows that the radius $3/5$ is the best possible. This completes the proof.\end{proof}
The following result provides an operator valued analogue of \textrm{Theorem C (a)} by utilizing the certain power of the operator norm value of functions $f\in\mathcal{B}\left(\mathbb{D},\mathcal{B}(\mathcal{H})\right)$.
\begin{lem}\label{lem4} Suppose that $f\in\mathcal{B}\left(\mathbb{D},\mathcal{B}(\mathcal{H})\right)$ with the expansion $f(z)=\sum_{n=1}^\infty A_nz^n$ in $\mathbb{D}$ such that $A_n\in\mathcal{B}(\mathcal{H})$ for all $n\in\mathbb{N}$. If $\Vert f(z)\Vert \leq 1$ in $\mathbb{D}$, then $\mathcal{G}_{1,f}(r)\leq 1$ for $|z|=r\leq r_1$, where $\mathcal{G}_{1,f}(r)$ is given in (\ref{e3}) and $r_1(\approx 0.484063)$ is the unique positive root of the equation $44 r^4- 68 r^3- 121 r^2+22r+23=0$ in $(0,1)$. The radius $r_1$ is the best possible.\end{lem}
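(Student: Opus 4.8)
I would prove \textrm{Lemma \ref{lem4}} by imitating the proof of \textrm{Lemma \ref{lem3}} and controlling the extra summand $\Vert f(z)\Vert$ via the Schwarz--Pick type estimate of \textrm{Lemma \ref{lem2}}. Write $f(z)=zg(z)$ with $g\in\mathcal{B}(\mathbb{D},\mathcal{B}(\mathcal{H}))$, $g(z)=\sum_{n=0}^\infty B_nz^n$, $B_n\equiv A_{n+1}$, $\Vert g(z)\Vert\le1$ on $\mathbb{D}$, $g(0)=b_0I$, and put $a:=\Vert A_1\Vert=\Vert B_0\Vert=|b_0|\in[0,1)$. Exactly as in the passage $(\ref{s4})$--$(\ref{Ra4})$ in the proof of \textrm{Lemma \ref{lem3}}, one has
\[\sum_{n=1}^\infty\Vert A_n\Vert r^n+\left(\frac{1}{1+\Vert A_1\Vert}+\frac{r}{1-r}\right)\sum_{n=2}^\infty\Vert A_n\Vert^2r^{2n-1}\le ar+\frac{(1-a^2)r^2}{1-r}.\]
Since $z$ is a scalar, $\Vert f(z)\Vert=|z|\,\Vert g(z)\Vert$, and \textrm{Lemma \ref{lem2}} applied to $g$ gives $\Vert g(z)\Vert\le(a+|z|)/(1+a|z|)$; hence, for $|z|=r$,
\[\mathcal{G}_{1,f}(r)\le F(a,r):=\frac{r(a+r)}{1+ar}+ar+\frac{(1-a^2)r^2}{1-r}.\]
Thus it suffices to show $F(a,r)\le1$ for every $a\in[0,1)$ whenever $r\le r_1$.

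Next I would analyse $a\mapsto F(a,r)$ on $[0,1]$. A direct computation gives
\[\frac{\pa}{\pa a}F(a,r)=\frac{r(1-r^2)}{(1+ar)^2}+r-\frac{2ar^2}{1-r},\qquad\frac{\pa^2}{\pa a^2}F(a,r)=-\frac{2r^2(1-r^2)}{(1+ar)^3}-\frac{2r^2}{1-r}<0,\]
so $F(\cdot,r)$ is strictly concave, and since $\pa_aF(0,r)=r(2-r^2)>0$ its maximum over $[0,1]$ is attained either at $a=1$, where $F(1,r)=2r$, or at the unique interior critical point $a=a^\ast(r)\in(0,1)$ (this is the relevant case near $r_1$, since one checks $a^\ast<1$ precisely for $r>\sqrt2-1$, and $r_1>\sqrt2-1$). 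At $a^\ast$ the critical relation reads $\frac{1-r^2}{(1+a^\ast r)^2}+1=\frac{2a^\ast r}{1-r}$. Setting $p:=a^\ast r$ and substituting this relation into $F(a^\ast,r)$, the inequality $F(a^\ast,r)\le1$ turns into a polynomial inequality in $p$ and $r$; eliminating $p$ between it and the critical relation (both cubic in $p$) produces, after a routine resultant-type computation, the polynomial $44r^4-68r^3-121r^2+22r+23$ (up to a factor having no zero in $(0,1)$). This quartic equals $23$ at $r=0$ and $-100$ at $r=1$ and has $r_1\approx0.484063$ as its only zero in $(0,1)$, so $F(a^\ast,r)\le1\iff 44r^4-68r^3-121r^2+22r+23\ge0\iff r\le r_1$. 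Together with $F(0,r)=r^2(2-r)/(1-r)<1$ and $F(1,r)=2r<1$ on $(0,r_1]$, this yields $\mathcal{G}_{1,f}(r)\le1$ for $r\le r_1$.

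For sharpness I would reuse the test function from \textrm{Lemma \ref{lem3}}, namely $\Psi_a(z)=\frac{z(a-z)}{1-az}I=\sum_{n=1}^\infty A_nz^n$ with $A_1=aI$ and $A_n=-(1-a^2)a^{n-2}I$ for $n\ge2$, $a\in[0,1)$, evaluated at $z=-r$. There $\Vert\Psi_a(-r)\Vert=r(a+r)/(1+ar)$, while the remaining part of $\mathcal{G}_{1,\Psi_a}(r)$ equals $ar+(1-a^2)r^2/(1-r)$ by the exact computation in \textrm{Lemma \ref{lem3}}; hence $\mathcal{G}_{1,\Psi_a}(r)=F(a,r)$ at $z=-r$. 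Since $a^\ast(r)\in(0,1)$ and $F(a^\ast(r),r)>1$ for every $r\in(r_1,1)$ (equivalently the quartic is negative there), the choice $a=a^\ast(r)$ shows the estimate fails for each such $r$, so the radius $r_1$ is best possible.

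The main obstacle is the elimination in the middle paragraph: carrying out the resultant of the critical-point equation and the relation $F(a^\ast,r)=1$ in the unknown $p=a^\ast r$, and then verifying that the extraneous polynomial factors introduced by clearing denominators have no zeros in $(0,1)$, so that $44r^4-68r^3-121r^2+22r+23$ is indeed the quantity that changes sign at the threshold radius. Everything else is a direct adaptation of \textrm{Lemmas \ref{lem2} and \ref{lem3}} together with elementary one-variable calculus.
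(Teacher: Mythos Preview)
Your proposal is correct and follows essentially the same route as the paper: write $f=zg$, bound the refined sum by $(\ref{s4})$, bound $\Vert f(z)\Vert$ via \textrm{Lemma \ref{lem2}}, arrive at the majorant $F(a,r)=\dfrac{r(a+r)}{1+ar}+ar+\dfrac{(1-a^2)r^2}{1-r}$, maximize over $a\in[0,1)$, and test sharpness with $\Psi_a$ at $z=-r$. The only difference is in how the optimization is carried out. You maximize $F$ itself and are then forced into the resultant elimination that you flag as ``the main obstacle.'' The paper instead clears denominators first, writing
\[
F(a,r)-1=\frac{G_4(a,r)}{(1+ar)(1-r)},\qquad G_4(a,r)=r(a+r)(1-r)+(a^2r^2-1)(1-r)+(1-a^2)(1+ar)r^2,
\]
so that showing $F\le1$ is equivalent to showing the \emph{polynomial} $G_4\le0$. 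Since $G_4$ is cubic in $a$, its $a$-derivative is quadratic, the critical point $a_1=\bigl(-r+\sqrt{4r^2-3r+3}\,\bigr)/(3r)$ is explicit by the quadratic formula, and substituting $a_1$ back and rationalizing the square root yields the quartic $44r^4-68r^3-121r^2+22r+23$ directly---no resultant needed. This is the cleaner bookkeeping; your concavity/critical-point analysis of $F$ is valid but does more work than necessary.
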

\begin{proof} By using similar arguments as those of \textrm{Lemma \ref{lem3}}, we have $f(z)=zg(z)$, where $g\in\mathcal{B}\left(\mathbb{D},\mathcal{B}(\mathcal{H})\right)$ with the expansion $g(z)=\sum\limits_{n=0}^\infty B_nz^n$ such that $B_n\equiv A_{n+1}\in\mathcal{B}(\mathcal{H})$ for all $n\in\mathbb{N}\cup\{0\}$ and $\Vert g(z)\Vert \leq 1$ for $z\in\mathbb{D}$. Let $g(0)=b_0 I$, where $b_0\in\mathbb{C}$. Since $\Vert g(z)\Vert \leq 1$ for $z\in\mathbb{D}$, so $|b_0|<1$. Let $\Vert A_1\Vert=\Vert B_0\Vert=|b_0|=a\in[0,1)$. In view of \textrm{Lemma \ref{lem2}}, we have 
\be \label{r2}\Vert g(z)\Vert \leq \frac{\Vert g(0)\Vert +|z|}{1+\Vert g(0)\Vert |z|}\quad\text{and}\quad\Vert f(z)\Vert=\left\Vert zg(z)\right \Vert\leq \frac{r(a+r)}{(1+ar)}\quad\text{for}\quad\vert z\vert=r<1.\ee
For $a\in[0,1)$ and by using (\ref{s4}), we deduce that
\bea\label{n1} \mathcal{G}_{1,f}(r)&=&\Vert f(z)\Vert+\sum_{n=1}^\infty \Vert A_n\Vert r^n+\left(\frac{1}{1+\Vert A_1\Vert }+\frac{r}{1-r}\right)\sum_{n=2}^\infty \Vert A_n\Vert ^2r^{2n-1}\nonumber\\[2mm]
&\leq&\frac{r(a+r)}{(1+ar)}+ar+\frac{r^2(1-a^2)}{1-r}\nonumber\\[2mm]
&=&\frac{r(a+r)(1-r)+ar(1+ar)(1-r)+(1-a^2)(1+ar)r^2}{(1+ar)(1-r)}\nonumber\\[2mm]
&=&1+\frac{r(a+r)(1-r)+(ar-1)(1+ar)(1-r)+(1-a^2)(1+ar)r^2}{(1+ar)(1-r)}\nonumber\\[2mm]
&=&1+\frac{G_4(a,r)}{(1+ar)(1-r)},\eea
where $G_4(a,r)=r(a+r)(1-r)+(a^2r^2-1)(1-r)+(1-a^2)(1+ar)r^2$.
Differentiating partially twice with respect to $a$, we get
\be\label{s6}\frac{\pa }{\pa a}G_4(a,r)
=-3r^3a^2-2r^3a+(r^3-r^2+r)\quad\text{and}\quad\frac{\pa^2}{\pa a^2}G_4(a,r)=-2r^3-6ar^3\leq 0,\ee
for all $r,a\in[0,1)$.
This shows that $G_4(a,r)$ has critical points at $(-r\pm \sqrt{4 r^2- 3 r+3})/(3 r)$ and it is evident that $(-r-\sqrt{4 r^2- 3 r+3})/(3 r)\not\in(0,1)$. Also, we see that $a_1=(-r+\sqrt{4 r^2- 3 r+3})/(3 r)\in(0,1)$ for $r>(\sqrt{17}-1)/8$. Thus, we have
\beas G_4(a,r)\leq G_4(a_1,r)=\frac{- 38 r^3  + 63 r^2+ 18 r-27 +( 8 r^2-6r+6) \sqrt{4 r^2-3r +3}}{27}\leq 0\eeas
if
\beas G_5(r):=44 r^4- 68 r^3- 121 r^2+22r+23\geq  0,\eeas
which is true for $r\leq r_1$ and this shown in Fig. \ref{fig3}, where $r_1(\approx 0.484063)$ is the unique positive root of the equation $G_5(r)=0$ between $0$ and $1$.
\begin{figure}[H]
\includegraphics[scale=0.6]{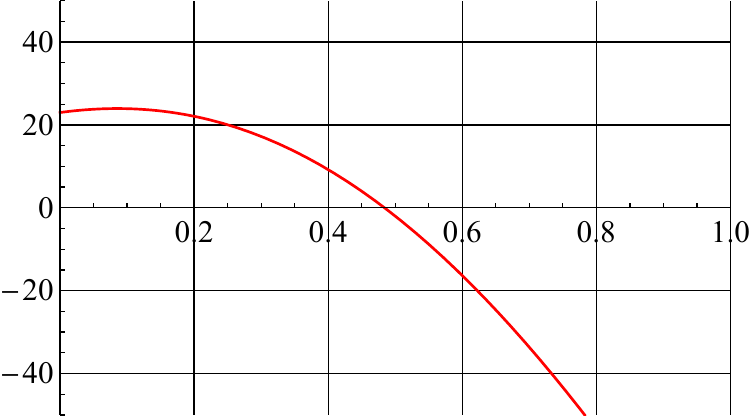}
\caption{The graph of the polynomial $G_5(r)$ for $r\in\left(0,1\right)$}
\label{fig3}
\end{figure}
In order to show that the constant $r_1$ is sharp, we consider the function $\Psi_a(z)$ given in (\ref{s5}).
By using the function $\Psi_a(z)$, a simple calculation shows that
\beas &&\mathcal{G}_{1,\Psi_a}(r)\\
&=& \Vert \Psi_a(-r)\Vert+\sum_{n=1}^\infty \Vert A_n\Vert r^n+\left(\frac{1}{1+\Vert A_1\Vert}+\frac{r}{1-r}\right)\sum_{n=2}^\infty \Vert A_n\Vert^2r^{2n-1}\\[2mm]
&=&\frac{r(a+r)}{(1+ar)}+ar+(1-a^2) \sum_{n=2}^\infty a^{n-2}r^n+(1-a^2)^2\left(\frac{1}{1+a}+\frac{r}{1-r}\right)\sum_{n=2}^\infty a^{2(n-2)}r^{2n-1}\\[2mm]
&=&\frac{r(a+r)}{(1+ar)}+ar+(1-a^2) \frac{r^2}{1-ar}+\frac{(1-a^2)(1 -a)r^3}{(1-r)(1-ar)}\\[2mm]
&=&\frac{r(a+r)}{(1+ar)}+ar+\frac{(1-a^2)r^2}{(1-r)}=1+\frac{G_6(a,r)}{(1+ar)(1-r)},\eeas
where $G_6(a,r)=r(a+r)(1-r)+(a^2r^2-1)(1-r)+(1-a^2)(1+ar)r^2$. Comparing this expression with the right-hand side of the expression in formula (\ref{n1}) gives the claimed 
sharpness. In order to verify the aforementioned assertion, it is necessary to demonstrate that $G_6(a,r)>0$ for any $r>r_1$ and a suitable number $a\in(0, 1)$. From 
(\ref{s6}), we see that $G_6(a,r)$ is an increasing function of $a$ for $a\leq (-r+\sqrt{4 r^2- 3 r+3})/(3 r):=a_1$ and $a_1\in(0,1)$ for $r>(\sqrt{17}-1)/8$. Now, we deduce that
\beas G_6(a_1,r)=\frac{- 38 r^3  + 63 r^2+ 18 r-27 +( 8 r^2-6r+6) \sqrt{ ( 4 r^2-3r +3 )}}{27}\geq 0\eeas
for $r>r_1$ and this shown in Fig. \ref{fig2}, where $r_1(\approx 0.484063)$ is the unique positive root of the equation $G_5(r)=0$ between $0$ and $1$.
\begin{figure}[H]
\includegraphics[scale=0.6]{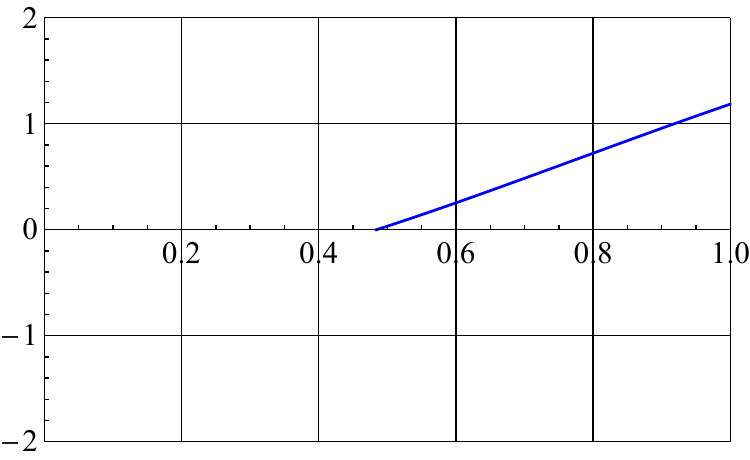}
\caption{The graph of $G_6(a_1,r)$ for $r>r_1$}
\label{fig2}
\end{figure}
This shows that the radius $r_1$ is the best possible. This completes the proof.
\end{proof}
\begin{lem}\label{lem5} Suppose that $f\in\mathcal{B}\left(\mathbb{D},\mathcal{B}(\mathcal{H})\right)$ with the expansion $f(z)=\sum_{n=1}^\infty A_nz^n$ in $\mathbb{D}$ such that $A_n\in\mathcal{B}(\mathcal{H})$ for all $n\in\mathbb{N}$. If $\Vert f(z)\Vert \leq 1$ in $\mathbb{D}$, then $\mathcal{G}_{2,f}(r)\leq 1$ for $|z|=r\leq (\sqrt{5}-1)/2$, where $\mathcal{G}_{2,f}(r)$ is given in (\ref{e3}). The radius $(\sqrt{5}-1)/2$ is the best possible. \end{lem}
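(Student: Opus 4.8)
The plan is to closely follow the proof of \textrm{Lemma \ref{lem4}} in its first half and then carry out a two–variable optimization. Writing $f(z)=zg(z)$ with $g\in\mathcal{B}\left(\mathbb{D},\mathcal{B}(\mathcal{H})\right)$, $g(z)=\sum_{n=0}^\infty B_nz^n$, $B_n\equiv A_{n+1}$, $\Vert g(z)\Vert\leq1$ on $\mathbb{D}$ and $g(0)=b_0I$ with $|b_0|<1$, I set $a:=\Vert A_1\Vert=\Vert B_0\Vert=|b_0|\in[0,1)$. By inequality (\ref{s4}) from the proof of \textrm{Lemma \ref{lem3}} one already has
\[\sum_{n=1}^\infty\Vert A_n\Vert r^n+\left(\frac{1}{1+a}+\frac{r}{1-r}\right)\sum_{n=2}^\infty\Vert A_n\Vert^2r^{2n-1}\leq ar+\frac{(1-a^2)r^2}{1-r},\]
and by \textrm{Lemma \ref{lem2}} applied to $g$, together with $\Vert f(z)\Vert=|z|\,\Vert g(z)\Vert$, we get $\Vert f(z)\Vert\leq r(a+r)/(1+ar)$, hence $\Vert f(z)\Vert^2\leq r^2(a+r)^2/(1+ar)^2$, for $|z|=r<1$.

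Adding these two estimates yields
\[\mathcal{G}_{2,f}(r)\leq\frac{r^2(a+r)^2}{(1+ar)^2}+ar+\frac{(1-a^2)r^2}{1-r}=:\Phi(a,r),\]
so it suffices to prove $\Phi(a,r)\leq1$ for all $a\in[0,1)$ whenever $r\leq(\sqrt5-1)/2$. I would fix $r$ and study $a\mapsto\Phi(a,r)$ on $[0,1]$: using $\pa_a\left[(a+r)^2/(1+ar)^2\right]=2(a+r)(1-r^2)/(1+ar)^3$ one writes down $\pa_a\Phi$, and since the contribution of the last two summands to $\pa_a^2\Phi$ equals $-2r^2/(1-r)<0$, one argues that $\Phi(\cdot,r)$ admits at most one interior maximizer $a^\ast=a^\ast(r)$, determined by a polynomial critical–point equation in $a$ with $r$–dependent coefficients. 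Substituting $a^\ast$ back reduces the bound $\Phi(a^\ast,r)\leq1$ to an explicit one–variable polynomial inequality in $r$, whose admissible range is exactly $r\leq(\sqrt5-1)/2$ (the endpoint being pinned down by $r^2+r-1=0$). I expect this step — isolating the correct branch of the critical point and then controlling the resulting $r$–polynomial — to be the main obstacle; the remainder is routine bookkeeping.

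For sharpness I would use the function $\Psi_a$ from (\ref{s5}), namely $\Psi_a(z)=\frac{z(a-z)}{1-az}I=\sum_{n=1}^\infty A_nz^n$ with $A_1=aI$ and $A_n=-(1-a^2)a^{n-2}I$ for $n\geq2$. The computation in the proof of \textrm{Lemma \ref{lem3}} gives $\mathcal{G}_{\Psi_a}(r)=ar+(1-a^2)r^2/(1-r)$, while $\Psi_a(-r)=\frac{-r(a+r)}{1+ar}I$ gives $\Vert\Psi_a(-r)\Vert^2=r^2(a+r)^2/(1+ar)^2$; hence $\mathcal{G}_{2,\Psi_a}(r)$ evaluated at $z=-r$ equals $\Phi(a,r)$, so the upper bound above is attained. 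Choosing the optimal value of $a$ — in particular letting $a\to1$, for which $\Psi_1(z)=zI$ and $\mathcal{G}_{2,\Psi_1}(r)=r^2+r$ — shows $\mathcal{G}_{2,f}(r)>1$ for every $r>(\sqrt5-1)/2$, so the constant $(\sqrt5-1)/2$ cannot be improved.
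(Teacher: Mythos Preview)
Your setup and upper bound coincide exactly with the paper's: both arrive at
\[
\mathcal{G}_{2,f}(r)\;\le\;\Phi(a,r):=\frac{r^2(a+r)^2}{(1+ar)^2}+ar+\frac{(1-a^2)r^2}{1-r},\qquad a=\Vert A_1\Vert\in[0,1),
\]
and what remains is to show $\Phi(a,r)\le1$ for all $a\in[0,1)$ whenever $r\le(\sqrt5-1)/2$. The paper does not hunt for an interior critical point; it writes $G_7:=\Phi-1$ and argues that $G_7(\cdot,r)$ is monotone increasing in $a$, concluding $G_7(a,r)\le G_7(1,r)=r^2+r-1$, which is the route your sharpness computation (letting $a\to1$) anticipates.

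The step you flag as ``the main obstacle'' is, however, genuinely insurmountable rather than merely technical: the target inequality $\Phi(a,r)\le1$ is \emph{false} on the claimed range. Since $\mathcal{G}_{2,f}(r)=\Vert f(z)\Vert^2+\mathcal{G}_f(r)\ge\mathcal{G}_f(r)$ and \textrm{Lemma~\ref{lem3}} shows that the radius $3/5$ is already sharp for $\mathcal{G}_f(r)\le1$, the admissible radius for $\mathcal{G}_{2,f}(r)\le1$ can be at most $3/5<(\sqrt5-1)/2$. Concretely, your own extremal family at $a=0$, namely $\Psi_0(z)=-z^2I$, gives
\[
\mathcal{G}_{2,\Psi_0}(r)=r^4+r^2+\frac{r^3}{1-r}=r^4+\frac{r^2}{1-r}=\Phi(0,r),
\]
and $\Phi(0,0.6)=0.1296+0.9=1.0296>1$ although $0.6<(\sqrt5-1)/2\approx0.618$. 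Thus no optimization in $a$ can deliver the stated bound: your critical--point analysis would necessarily return $\Phi(a^\ast,r)>1$ for $r$ in the interval $(\,\approx0.595,\,(\sqrt5-1)/2\,]$. The paper's monotonicity argument has the matching defect: the lower bound it derives for $\partial_aG_7$ is nonnegative only for $a\le(1-r^2)/(2r^2(3-r))$, not for all $a\in[0,1)$, and indeed one checks $\partial_aG_7(1,0.6)<0$ together with $G_7(0,0.6)>0>G_7(1,0.6)$, so the asserted monotonicity fails and with it the conclusion $G_7(a,r)\le G_7(1,r)$.
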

\begin{proof} By applying analogous arguments to those presented in \textrm{Lemma \ref{lem4}}, we have
\beas \mathcal{G}_{2,f}(r)&=&\Vert f(z)\Vert^2+\sum_{n=1}^\infty \Vert A_n\Vert r^n+\left(\frac{1}{1+\Vert A_1\Vert }+\frac{r}{1-r}\right)\sum_{n=2}^\infty \Vert A_n\Vert ^2r^{2n-1}\\[2mm]
&\leq&\left(\frac{r(a+r)}{1+ar}\right)^2+ar+\frac{r^2(1-a^2)}{1-r}\\[2mm]
&=&\left(\frac{r(a+r)}{1+ar}\right)^2+\frac{ar(1-r)+(1-a^2)r^2}{(1-r)}=1+G_7(a,r),\eeas
where 
\beas G_7(a,r)=\left(\frac{r(a+r)}{1+ar}\right)^2+\frac{(ar-1)(1-r)+(1-a^2)r^2}{(1-r)}.\eeas 
Differentiating partially $G_7(a,r)$ with respect to $a$, we get
\beas\label{s7} \frac{\pa}{\pa a}G_7(a,r)=\frac{(1 - r)r - 2 a r^2}{1 - r} + \frac{2 r^2 (a + r) (1 - r^2)}{(1 + a r)^3}=\frac{(1 - r)r - 2 a r^2}{1 - r} +\frac{2 r^2 (1 - r)}{(1+r)} G_8(r),
\eeas
where $G_8(r):=(1+r)^2(a + r)/(1 + a r)^3$ for $r\in[0,1)$. Differentiating $G_8(r)$ with respect to $r$, we see that
\beas G'_8(r)&=&-\frac{3 a (1 + r)^2 (a + r)}{(1 + a r)^4} + \frac{(1 + r)^2}{(1 + a r)^3} + \frac{2 (1 + r) (a + r)}{(1 + a r)^3}\\
&=&\frac{(1-a)(1+r)}{(1 + a r)^4}\left(r(a+3)+(1+3a)\right)\geq 0\quad\text{for}\quad r\in[0,1).\eeas
Thus, $G_8(r)$ is a monotonically increasing function of $r$ and it follows that  
\beas G_8(r)\geq G_8(0)=a\quad \text{for all}\quad r\in[0,1)\quad \text{and for}\quad a\in[0,1). \eeas
From (\ref{s7}), we have 
\beas \frac{\pa }{\pa a}G_7(a,r)&\geq& \frac{(1 - r)r - 2 a r^2}{1 - r} +\frac{2 ar^2 (1 - r)}{(1+r)}\\
&=&\frac{(1 - r^2)r - 2 a r^2(1+r)+2 ar^2 (1 - r)^2}{(1-r)(1+r)}=\frac{(1 - r^2)r -2 ar^3 (3-r)}{(1-r)(1+r)}\geq 0.\eeas
if $a\leq (1 - r^2)/(2r^2(3-r))$ and $(1 - r^2)/(2r^2(3-r))\in [0,1)$ for $r>r_2$, where $r_2(\approx 0.401721)$ is the unique positive root of the equation $2 r^3- 7 r^2 +1=0$ in $(0,1)$. 
Consequently, the function $G_7(a,r)$ is a monotonically increasing function of $a$ and this leads to the conclusion that
\beas G_7(a,r)\leq G_7(1,r)=r^2+r-1\leq 0,\quad\text{which is true for}\;r\leq (\sqrt{5}-1)/2.\eeas
\indent To demonstrate the sharpness of the constant $r_1$, we consider the function $\Psi_a(z)$ given in (\ref{s5}).
By utilizing the function $\Psi_a(z)$, a straightforward computation demonstrates that
\beas \mathcal{G}_{2,\Psi_a}(r)&=& \Vert \Psi_a(-r)\Vert^2+\sum_{n=1}^\infty \Vert A_n\Vert r^n+\left(\frac{1}{1+\Vert A_1\Vert}+\frac{r}{1-r}\right)\sum_{n=2}^\infty \Vert A_n\Vert^2r^{2n-1}\\[2mm]
&=&\left(\frac{r(a+r)}{(1+ar)}\right)^2+ar+(1-a^2) \sum_{n=2}^\infty a^{n-2}r^n\\
&&+(1-a^2)^2\left(\frac{1}{1+a}+\frac{r}{1-r}\right)\sum_{n=2}^\infty a^{2(n-2)}r^{2n-1}\\[2mm]
&=&\left(\frac{r(a+r)}{(1+ar)}\right)^2+ar+(1-a^2) \frac{r^2}{1-ar}+\frac{(1-a^2)(1 -a)r^3}{(1-r)(1-ar)}\\[2mm]
&=&\left(\frac{r(a+r)}{(1+ar)}\right)^2+ar+\frac{(1-a^2)r^2}{(1-r)}=1+G_9(a,r),\eeas
where 
\beas G_9(a,r)=\left(\frac{r(a+r)}{1+ar}\right)^2+\frac{(ar-1)(1-r)+(1-a^2)r^2}{(1-r)}\eeas 
and it is easy to see that $\lim_{a\to 1^{-}}G_9(a,r)=r^2+r-1>0$ for $r>(\sqrt{5}-1)/2$. This demonstrates that the radius $(\sqrt{5}-1)/2$ represents the best possible value. This completes the proof. \end{proof}
The following result provides an operator valued analogue of \textrm{Theorem C (b)}.
\begin{lem}\label{lem6} Suppose that $f\in\mathcal{B}\left(\mathbb{D},\mathcal{B}(\mathcal{H})\right)$ with the expansion $f(z)=\sum_{n=1}^\infty A_nz^n$ in $\mathbb{D}$ such that $A_n\in\mathcal{B}(\mathcal{H})$ for all $n\in\mathbb{N}$. If $\Vert f(z)\Vert \leq 1$ in $\mathbb{D}$, then $\mathcal{H}_{f}(r)\leq 1$ for $|z|=r\leq (5-\sqrt{17})/2$, where $\mathcal{H}_{f}(r)$ is given in (\ref{e4}). The radius $(5-\sqrt{17})/2$ is the best possible. \end{lem}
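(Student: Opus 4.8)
The plan is to bootstrap from estimate (\ref{s4}) already obtained in the proof of \textrm{Lemma \ref{lem3}}, which, with $a:=\Vert A_1\Vert\in[0,1)$, reads $\sum_{n=1}^\infty\Vert A_n\Vert r^n\le ar+\frac{(1-a^2)r^2}{1-r}-\left(\frac{1}{1+a}+\frac{r}{1-r}\right)\sum_{n=2}^\infty\Vert A_n\Vert^2r^{2n-1}$. Using $\Vert A_1\Vert=a$ one rewrites the tail as $\sum_{n=2}^\infty\Vert A_n\Vert^2r^{2n-1}=r^{-1}\sum_{n=1}^\infty\Vert A_n\Vert^2r^{2n}-a^2r$, so that adding $\left(\frac{r^{-1}}{1+a}+\frac{1}{1-r}\right)\sum_{n=1}^\infty\Vert A_n\Vert^2r^{2n}$ to both sides makes the quadratic tail cancel exactly --- this is the one genuinely non-mechanical point, and it is precisely the role played by the $r^{-1}$ occurring inside $\mathcal{H}_f$ --- leaving
\[ \mathcal{H}_f(r)\le ar+\frac{(1-a^2)r^2}{1-r}+a^2r\left(\frac{1}{1+a}+\frac{r}{1-r}\right)=ar+\frac{a^2r}{1+a}+\frac{r^2}{1-r}=:H_1(a,r). \]

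It then remains to show $H_1(a,r)\le1$ for $a\in[0,1)$ and $r\le(5-\sqrt{17})/2$. Since $\frac{a^2}{1+a}=a-1+\frac{1}{1+a}$, one computes $\frac{\partial}{\partial a}H_1(a,r)=2r-\frac{r}{(1+a)^2}\ge r>0$ on $[0,1)$, so $H_1(a,r)<\lim_{a\to1^-}H_1(a,r)=\frac{3r}{2}+\frac{r^2}{1-r}$; and $\frac{3r}{2}+\frac{r^2}{1-r}\le1$ is equivalent, after clearing denominators, to $r^2-5r+2\ge0$, that is, to $r\le(5-\sqrt{17})/2$. This proves $\mathcal{H}_f(r)\le1$ on the asserted range.

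For sharpness I would feed the extremal function $\Psi_a$ from (\ref{s5}) into $\mathcal{H}_f$; both series are geometric and sum in closed form, giving $\mathcal{H}_{\Psi_a}(r)=S_1(a,r)+\left(\frac{r^{-1}}{1+a}+\frac{1}{1-r}\right)S_2(a,r)$ with $S_1(a,r)=ar+\frac{(1-a^2)r^2}{1-ar}$ and $S_2(a,r)=a^2r^2+\frac{(1-a^2)^2r^4}{1-a^2r^2}$. Letting $a\to1^-$ yields $\mathcal{H}_{\Psi_a}(r)\to r+\left(\frac{1}{2r}+\frac{1}{1-r}\right)r^2=\frac{3r}{2}+\frac{r^2}{1-r}$, which exceeds $1$ for every $r\in\left((5-\sqrt{17})/2,\,1\right)$; hence for such $r$ one can choose $a$ close to $1$ with $\mathcal{H}_{\Psi_a}(r)>1$, so the radius $(5-\sqrt{17})/2$ cannot be enlarged.

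I do not expect a genuine obstacle here. Unlike \textrm{Lemma \ref{lem4}}, where the extra term $\Vert f(z)\Vert$ forces the quartic $G_5(r)$, the $\mathcal{H}$-functional collapses --- after the exact cancellation of the quadratic tail --- to the elementary one-variable estimate for $H_1(a,r)$, whose maximization in $a$ is monotone and whose condition in $r$ is merely quadratic. The only thing requiring care is the bookkeeping: converting correctly between $\sum\Vert A_n\Vert^2r^{2n-1}$ and $\sum\Vert A_n\Vert^2r^{2n}$ so that the coefficient in front matches $\frac{r^{-1}}{1+a}+\frac{1}{1-r}$ term by term and the cancellation is exact rather than merely approximate.
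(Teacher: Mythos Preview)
Your proof is correct and follows essentially the same route as the paper: both start from (\ref{s4}), perform the same bookkeeping to cancel the quadratic tail exactly, and arrive at the identical upper bound $H_1(a,r)=ar+\dfrac{a^2r}{1+a}+\dfrac{r^2}{1-r}$, then establish sharpness via $\Psi_a$ with $a\to1^-$.

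The one small difference is in the maximization of $H_1$ over $a\in[0,1)$. The paper rewrites $H_1(a,r)=1+\dfrac{\Phi(a,r)}{(1-r)(1+a)}$ with $\Phi(a,r)=2r(1-r)a^2+(2r-1)a+r^2+r-1$, observes that $\Phi$ is convex in $a$, and therefore checks \emph{both} endpoints $a=0$ and $a=1$, splitting into two cases according to which dominates. Your monotonicity argument, based on $\partial_a H_1=2r-\dfrac{r}{(1+a)^2}\ge r>0$, is cleaner: it shows directly that the supremum is at $a\to1^-$, so only that single endpoint needs to be examined and the case split disappears. Both routes lead to the same quadratic condition $r^2-5r+2\ge0$.
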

\begin{proof} By applying analogous arguments to those presented in \textrm{Lemma \ref{lem3}} and from (\ref{s4}), we have
\bea\label{n3} &&\sum_{n=1}^\infty \Vert A_n\Vert r^n+\frac{1}{1-r}\sum_{n=1}^\infty \Vert A_n\Vert ^2r^{2n}\nonumber\\[2mm]
&&\leq ra+(1-a^2)\frac{r^2}{1-r}-\left(\frac{1}{1+a}+\frac{r}{1-r}\right)\sum_{n=2}^\infty \Vert A_n\Vert^2 r^{2n-1}+\frac{1}{1-r}\sum_{n=1}^\infty \Vert A_n\Vert ^2r^{2n}\nonumber\\[2mm]
&&= ar+\frac{r^2}{1-r}-\frac{r^{-1}}{1+a}\sum_{n=1}^\infty \Vert A_n\Vert^2 r^{2n}+\frac{r^{-1}}{1+a}\Vert A_1\Vert^2 r^{2}.\eea
From (\ref{n3}), we deduce that
\beas \mathcal{H}_{f}(r)&=&\sum_{n=1}^\infty \Vert A_n\Vert r^n+\left(\frac{r^{-1}}{1+\Vert A_1\Vert }+\frac{1}{1-r}\right)\sum_{n=1}^\infty \Vert A_n\Vert ^2r^{2n}\\[2mm]
&\leq& ar+\frac{r^2}{1-r}+\frac{ra^2}{1+a}=\frac{ar(1-r)(1+a)+r^2(1+a)+a^2r(1-r)}{(1-r)(1+a)}\\[2mm]
&=&1+\frac{\Phi(a,r)}{(1-r)(1+a)},\eeas
where $\Phi(a,r)=2r(1-r)a^2+a(2r-1)+r^2+r-1$. Differentiating partially $\Phi(a,r)$ with respect to $a$, we get
\beas\label{n2} \frac{\pa}{\pa a}\Phi(a,r)=4r(1-r)a+2r-1\quad\text{and}\quad \frac{\pa^2}{\pa a^2}\Phi(a,r)=4r(1-r)\geq 0.\eeas
This implies that $\Phi(a,r)$ is a convex function of $a$ and this leads to the conclusion that
\beas \Phi(a,r)\leq \max\{\Phi(0,r),\Phi(1,r)\}=\left\{\begin{array}{lll}
\Phi(0,r)=r^2+r-1&\text{for}\;r\leq (2-\sqrt{2})/2,\\[2mm]
\Phi(1,r)=-r^2+5r-2&\text{for}\;r>(2-\sqrt{2})/2\end{array}\right.\eeas
and this shown in Fig. \ref{fig1}.
\begin{figure}[H]
\includegraphics[scale=0.7]{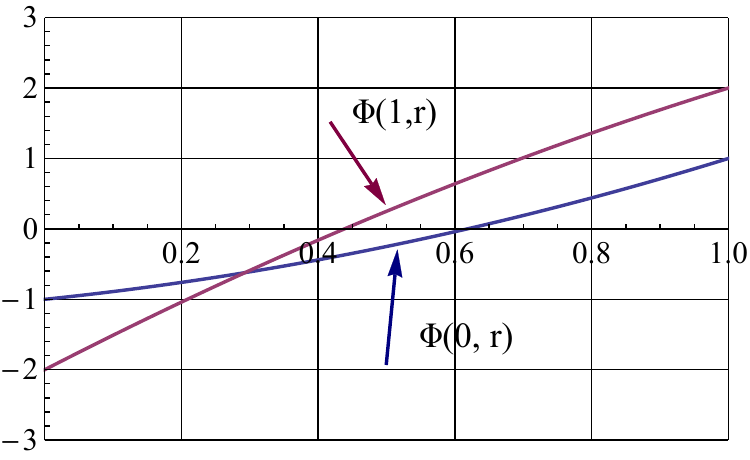}
\caption{The graph of the polynomials $\Phi(0,r)$ and $\Phi(1,r)$ for $0\leq r<1$}
\label{fig1}
\end{figure}
It is easy to see that $\Phi(0,r)\leq 0$ for $r\leq (\sqrt{5}-1)/2$ and $\Phi(1,r)\leq 0$ for $r\leq (5-\sqrt{17})/2$. Thus in any case $\Phi(a,r)\leq 0$ if $r\leq (5-\sqrt{17})/2$.\\[2mm]
\indent In order to show the sharpness of the constant $(5-\sqrt{17})/2$, we consider the function $\Psi_a(z)$ given in (\ref{s5}).
By utilizing the function $\Psi_a(z)$, a straightforward computation demonstrates that
\beas \mathcal{H}_{\Psi_a}(r)&=&\sum_{n=1}^\infty \Vert A_n\Vert r^n+\left(\frac{r^{-1}}{1+\Vert A_1\Vert}+\frac{1}{1-r}\right)\sum_{n=1}^\infty \Vert A_n\Vert^2r^{2n}\\
&=& ar+(1-a^2) \sum_{n=2}^\infty a^{n-2}r^n\\
&&+\left(\frac{r^{-1}}{1+a}+\frac{1}{1-r}\right)\left(a^2r^2+(1-a^2)^2r^4\sum_{n=2}^\infty (a^2r^2)^{n-2}\right)\\
&=& ar+(1-a^2) \frac{r^2}{1-ar}+\left(\frac{r^{-1}}{1+a}+\frac{1}{1-r}\right)\left(a^2r^2+\frac{(1-a^2)^2r^4}{1-a^2r^2}\right)=\Phi_1(a,r).\eeas
It is easy to see that
\beas \lim_{a\to1^{-}}\Phi_1(a,r)=\frac{3r-r^2}{2-2r}>1\quad\text{for}\quad r>(5-\sqrt{17})/2.\eeas
This demonstrates that the radius $(5-\sqrt{17})/2$ represents the best possible value. This completes the proof.\end{proof}
The following results gives an operator valued analogue of \textrm{Theorem C (b)} with a certain power of the operator norm value of functions $f\in\mathcal{B}\left(\mathbb{D},\mathcal{B}(\mathcal{H})\right)$.
\begin{lem}\label{lem7} Suppose that $f\in\mathcal{B}\left(\mathbb{D},\mathcal{B}(\mathcal{H})\right)$ with the expansion $f(z)=\sum_{n=1}^\infty A_nz^n$ in $\mathbb{D}$ such that $A_n\in\mathcal{B}(\mathcal{H})$ for all $n\in\mathbb{N}$. If $\Vert f(z)\Vert \leq 1$ in $\mathbb{D}$, then $\mathcal{H}_{1,f}(r)\leq 1$ for $|z|=r\leq 1/3$, where $\mathcal{H}_{1,f}(r)$ is given in (\ref{e5}). The radius $1/3$ is the best possible. \end{lem}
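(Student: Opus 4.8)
The plan is to follow the scheme already used for Lemmas \ref{lem3}--\ref{lem6}. First I would write $f(z)=zg(z)$, where $g\in\mathcal{B}\left(\mathbb{D},\mathcal{B}(\mathcal{H})\right)$ has the expansion $g(z)=\sum_{n=0}^\infty B_nz^n$ with $B_n\equiv A_{n+1}$, $g(0)=b_0 I$, $|b_0|<1$, and set $a:=\Vert A_1\Vert=\Vert B_0\Vert=|b_0|\in[0,1)$. The estimate (\ref{n3}) obtained inside the proof of Lemma \ref{lem6} already furnishes
$$\sum_{n=1}^\infty \Vert A_n\Vert r^n+\left(\frac{r^{-1}}{1+\Vert A_1\Vert}+\frac{1}{1-r}\right)\sum_{n=1}^\infty \Vert A_n\Vert^2 r^{2n}\leq ar+\frac{r^2}{1-r}+\frac{ra^2}{1+a},$$
while Lemma \ref{lem2} applied to $g$ gives $\Vert f(z)\Vert=\Vert zg(z)\Vert\leq r(a+r)/(1+ar)$ for $|z|=r<1$. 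Adding the two bounds reduces the claim to showing
$$H(a,r):=\frac{r(a+r)}{1+ar}+ar+\frac{r^2}{1-r}+\frac{ra^2}{1+a}\leq 1\qquad\text{for all } a\in[0,1),\ r\leq 1/3.$$

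The key observation is that $H(\cdot,r)$ is nondecreasing on $[0,1)$ for every fixed $r\in[0,1)$. Indeed, a termwise differentiation gives
$$\frac{\partial H}{\partial a}=\frac{r(1-r^2)}{(1+ar)^2}+r+\frac{ra(2+a)}{(1+a)^2}\geq 0,$$
since each summand is nonnegative. Hence $H(a,r)\leq\lim_{a\to 1^-}H(a,r)=\frac{5r}{2}+\frac{r^2}{1-r}$, and the remaining one-variable inequality $\frac{5r}{2}+\frac{r^2}{1-r}\leq 1$ is, after multiplication by $2(1-r)>0$, equivalent to $3r^2-7r+2=(3r-1)(r-2)\geq 0$, which holds exactly for $r\leq 1/3$. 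This proves $\mathcal{H}_{1,f}(r)\leq 1$ for $r\leq 1/3$.

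For the sharpness of $1/3$ I would take the extremal map $\Psi_a$ of (\ref{s5}) and evaluate it along the ray $z=-r$, so that $\Vert\Psi_a(-r)\Vert=r(a+r)/(1+ar)$. Writing $\Psi_a(z)=\sum_{n=1}^\infty A_n z^n$ with $\Vert A_1\Vert=a$ and $\Vert A_n\Vert=(1-a^2)a^{n-2}$ for $n\geq 2$, the computation already carried out in the sharpness part of Lemma \ref{lem6} identifies $\sum_{n=1}^\infty\Vert A_n\Vert r^n+\left(\frac{r^{-1}}{1+a}+\frac{1}{1-r}\right)\sum_{n=1}^\infty\Vert A_n\Vert^2 r^{2n}=\Phi_1(a,r)$, whence $\mathcal{H}_{1,\Psi_a}(r)=r(a+r)/(1+ar)+\Phi_1(a,r)$. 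Since $\lim_{a\to1^-}\Phi_1(a,r)=(3r-r^2)/(2-2r)$, one obtains $\lim_{a\to1^-}\mathcal{H}_{1,\Psi_a}(r)=(5r-3r^2)/(2(1-r))$, which is $>1$ precisely when $(3r-1)(r-2)<0$, i.e. for every $r\in(1/3,1)$. Hence no radius larger than $1/3$ can work.

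The only routine work here is the two partial-derivative computations and the factorization of $3r^2-7r+2$, so I do not foresee a genuine obstacle. It is worth stressing that the monotonicity of $H$ in $a$ is unconditional --- it does not even use $r\leq 1/3$ --- which is what collapses the whole estimate to the elementary inequality $(3r-1)(r-2)\geq 0$; and, exactly as in Lemmas \ref{lem5} and \ref{lem6}, the sharpness is witnessed by the family $\Psi_a$ as $a\to 1^-$ (the limiting member being $\Psi_1(z)=zI$).
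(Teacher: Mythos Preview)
Your proposal is correct and follows essentially the same approach as the paper: the paper also reduces to the bound $\Phi_2(a,r)=\dfrac{r(a+r)}{1+ar}+ar+\dfrac{r^2}{1-r}+\dfrac{ra^2}{1+a}$ via (\ref{n3}) and Lemma~\ref{lem2}, computes the same partial derivative $\partial_a\Phi_2=r+\dfrac{ar(2+a)}{(1+a)^2}+\dfrac{r(1-r^2)}{(1+ar)^2}\geq 0$, evaluates at $a=1$ to get $(5r-3r^2)/(2(1-r))\leq 1$ for $r\leq 1/3$, and establishes sharpness with $\Psi_a$ as $a\to 1^-$. Your explicit factorization $3r^2-7r+2=(3r-1)(r-2)$ and your reuse of $\Phi_1$ from Lemma~\ref{lem6} are only cosmetic variations.
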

\begin{proof} 
By applying analogous arguments to those presented in \textrm{Lemmas \ref{lem4}} and from (\ref{n3}), we can deduce that
\beas \mathcal{H}_{1,f}(r)&=&\Vert f(z)\Vert+\sum_{n=1}^\infty \Vert A_n\Vert r^n+\left(\frac{r^{-1}}{1+\Vert A_1\Vert }+\frac{1}{1-r}\right)\sum_{n=1}^\infty \Vert A_n\Vert ^2r^{2n}\\
&\leq& \frac{r(a+r)}{(1+ar)}+ar+\frac{r^2}{1-r}+\frac{ra^2}{1+a}=\Phi_2(a,r).\eeas
Differentiating partially $\Phi_2(a,r)$ with respect to $a$, we get
\bea\label{n4} \frac{\pa}{\pa a}\Phi_2(a,r)&=&r - \frac{a^2 r}{(1 + a)^2}+\frac{2 a r}{1 + a} - \frac{r^2 (a + r)}{(1 + a r)^2} + \frac{r}{1 + a r}\nonumber\\[2mm]
&=&r+ \frac{a r}{(1 + a)^2}(2+a)+ \frac{r}{(1 + a r)^2}\left(1-r^2\right)\geq 0.\eea
This implies that $\Phi_2(a,r)$ is a monotonically increasing function of $a$ and this leads to the conclusion that
\beas \Phi_2(a,r)\leq\Phi_2(1,r)=5r/2+\frac{r^2}{1-r}=\frac{-3 r^2+5 r }{2 (1-r)}\leq 1\quad\text{for}\quad r\leq 1/3.\eeas
In order to show that the constant $1/3$ is sharp, we consider the function $\Psi_a(z)$ given in (\ref{s5}).
By utilizing the function $\Psi_a(z)$, a straightforward computation demonstrates that
\beas \mathcal{H}_{\Psi_a}(r)&=&\Vert \Psi_a(-r)\Vert+\sum_{n=1}^\infty \Vert A_n\Vert r^n+\left(\frac{r^{-1}}{1+\Vert A_1\Vert}+\frac{1}{1-r}\right)\sum_{n=1}^\infty \Vert A_n\Vert^2r^{2n}\\[2mm]
&= &\frac{r(a+r)}{(1+ar)}+ar+(1-a^2) \sum_{n=2}^\infty a^{n-2}r^n\\
&&+\left(\frac{r^{-1}}{1+a}+\frac{1}{1-r}\right)\left(a^2r^2+(1-a^2)^2r^4\sum_{n=2}^\infty (a^2r^2)^{n-2}\right)\\[2mm]
&=& \frac{r(a+r)}{(1+ar)}+ar+(1-a^2) \frac{r^2}{1-ar}\\[2mm]
&&+\left(\frac{r^{-1}}{1+a}+\frac{1}{1-r}\right)\left(a^2r^2+\frac{(1-a^2)^2r^4}{1-a^2r^2}\right)=\Phi_3(a,r).\eeas
It is easy to see that
\beas \lim_{a\to1^{-}}\Phi_3(a,r)=\frac{-3 r^2+5 r }{2 (1-r)}>1\;\text{for}\;r>1/3.\eeas
This demonstrates that the radius $1/3$ represents the best possible value. This completes the proof.\end{proof}
\begin{lem}\label{lem8} Suppose that $f\in\mathcal{B}\left(\mathbb{D},\mathcal{B}(\mathcal{H})\right)$ with the expansion $f(z)=\sum_{n=1}^\infty A_nz^n$ in $\mathbb{D}$ such that $A_n\in\mathcal{B}(\mathcal{H})$ for all $n\in\mathbb{N}$. If $\Vert f(z)\Vert \leq 1$ in $\mathbb{D}$, then $\mathcal{H}_{2,f}(r)\leq 1$ for $|z|=r\leq r_2$, where $\mathcal{H}_{2,f}(r)$ is given in (\ref{e5}) and $r_2(\approx 0.393401)$ is the unique positive root of the equation $2 r^3 - r^2 -5r+2=0$ in $(0,1)$. The radius $r_2$ is the best possible. \end{lem}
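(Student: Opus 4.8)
The plan is to follow the blueprint of the proofs of \textrm{Lemmas \ref{lem6}} and \ref{lem7}, combining the pointwise norm bound of \textrm{Lemma \ref{lem2}} with the coefficient estimate (\ref{n3}) already obtained in the proof of \textrm{Lemma \ref{lem6}}. First I would factor $f(z)=zg(z)$, where $g\in\mathcal{B}\left(\mathbb{D},\mathcal{B}(\mathcal{H})\right)$ has expansion $g(z)=\sum_{n=0}^\infty B_nz^n$ with $B_n\equiv A_{n+1}$, $\Vert g(z)\Vert\le 1$ on $\mathbb{D}$ and $g(0)=b_0I$, $|b_0|<1$; put $a:=\Vert A_1\Vert=\Vert B_0\Vert=|b_0|\in[0,1)$. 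Applying \textrm{Lemma \ref{lem2}} to $g$ gives $\Vert f(z)\Vert=\Vert zg(z)\Vert\le r(a+r)/(1+ar)$ for $|z|=r<1$, hence $\Vert f(z)\Vert^2\le\big(r(a+r)/(1+ar)\big)^2$. Combining (\ref{n3}) with the definition (\ref{e4}) of $\mathcal{H}_f(r)$ exactly as in the proof of \textrm{Lemma \ref{lem6}} (there the term $\frac{r^{-1}}{1+a}\sum_{n=1}^\infty\Vert A_n\Vert^2r^{2n}$ inside $\mathcal{H}_f(r)$ cancels the negative term of (\ref{n3}), leaving only the $n=1$ contribution $\frac{r^{-1}}{1+a}\Vert A_1\Vert^2r^2=ra^2/(1+a)$) one obtains
\[
\mathcal{H}_f(r)\le ar+\frac{r^2}{1-r}+\frac{ra^2}{1+a}.
\]
Adding the bound for $\Vert f(z)\Vert^2$ then yields $\mathcal{H}_{2,f}(r)\le 1+G(a,r)$, where
\[
G(a,r)=\left(\frac{r(a+r)}{1+ar}\right)^2+ar+\frac{r^2}{1-r}+\frac{ra^2}{1+a}-1 .
\]

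The next step is to show $G(a,r)\le 0$ for all $a\in[0,1)$ whenever $r\le r_2$. Differentiating $G$ partially in $a$, each $a$-dependent summand has a nonnegative $a$-derivative: $\frac{\partial}{\partial a}\left(\frac{r(a+r)}{1+ar}\right)^2=\frac{2r^2(a+r)(1-r^2)}{(1+ar)^3}\ge 0$, $\frac{\partial}{\partial a}(ar)=r\ge 0$, and $\frac{\partial}{\partial a}\left(\frac{ra^2}{1+a}\right)=\frac{ra(a+2)}{(1+a)^2}\ge 0$. Hence $G(\cdot,r)$ is nondecreasing on $[0,1)$ and therefore
\[
G(a,r)\le\lim_{a\to 1^-}G(a,r)=r^2+r+\frac{r^2}{1-r}+\frac{r}{2}-1 .
\]
Multiplying the right-hand side by $2(1-r)>0$, the inequality $G(1,r)\le 0$ is equivalent to $2r^3-r^2-5r+2\ge 0$; since the cubic $p(r)=2r^3-r^2-5r+2$ has derivative $6r^2-2r-5<0$ on $(0,1)$, is positive at $r=0$ and negative at $r=1$, it satisfies $p(r)\ge 0$ precisely for $r\le r_2$, where $r_2(\approx 0.393401)$ is its unique root in $(0,1)$. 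This proves $\mathcal{H}_{2,f}(r)\le 1$ for $|z|=r\le r_2$.

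For the sharpness I would test the function $\Psi_a$ from (\ref{s5}). Using the closed-form value of $\mathcal{H}_{\Psi_a}(r)$ computed in the proof of \textrm{Lemma \ref{lem6}} together with $\Vert\Psi_a(-r)\Vert^2=\big(r(a+r)/(1+ar)\big)^2$, one obtains an explicit expression for $\mathcal{H}_{2,\Psi_a}(r)$ whose limit as $a\to 1^-$ equals $\big(-2r^3+r^2+3r\big)/\big(2(1-r)\big)$; this quantity exceeds $1$ exactly when $2r^3-r^2-5r+2<0$, that is, for $r>r_2$. Consequently no radius larger than $r_2$ is admissible, so $r_2$ is best possible. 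The only points needing care are the bookkeeping that reproduces the $\mathcal{H}_f$-estimate of \textrm{Lemma \ref{lem6}} and the monotonicity check for $G$ in $a$; everything else reduces to the same single-variable polynomial analysis as in the earlier lemmas, so I do not anticipate a genuine obstacle.
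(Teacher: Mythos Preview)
Your proposal is correct and follows essentially the same approach as the paper: both bound $\mathcal{H}_{2,f}(r)$ by $\big(r(a+r)/(1+ar)\big)^2+ar+r^2/(1-r)+ra^2/(1+a)$ via \textrm{Lemma \ref{lem2}} and (\ref{n3}), show this upper bound is nondecreasing in $a$ by the same term-by-term derivative computation, evaluate at $a\to 1^-$ to obtain $(-2r^3+r^2+3r)/(2(1-r))\le 1\Longleftrightarrow 2r^3-r^2-5r+2\ge 0$, and use $\Psi_a$ with $a\to 1^-$ for sharpness. The only cosmetic difference is that you subtract $1$ and work with $G(a,r)$ where the paper works with $\Phi_4(a,r)=G(a,r)+1$.
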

\begin{proof} 
By applying analogous arguments to those presented in \textrm{Lemma \ref{lem7}} and from (\ref{n3}), we can deduce that
\beas \mathcal{H}_{1,f}(r)&=&\Vert f(z)\Vert^2+\sum_{n=1}^\infty \Vert A_n\Vert r^n+\left(\frac{r^{-1}}{1+\Vert A_1\Vert }+\frac{1}{1-r}\right)\sum_{n=1}^\infty \Vert A_n\Vert ^2r^{2n}\\[2mm]
&\leq& \left(\frac{r(a+r)}{(1+ar)}\right)^2+ar+\frac{r^2}{1-r}+\frac{ra^2}{1+a}=\Phi_4(a,r).\eeas
Differentiating partially $\Phi_4(a,r)$ with respect to $a$, we get
\bea\label{n4} \frac{\pa}{\pa a}\Phi_4(a,r)&=&r - \frac{a^2 r}{(1+a)^2} + \frac{2 a r}{1 + a} - \frac{2 r^3 (a + r)^2}{(1 + a r)^3} + \frac{2 r^2 (a + r)}{(1 + a r)^2}\nonumber\\[2mm]
&=& r+\frac{a r}{(1 + a)^2}(2+a)+ \frac{2 r^2(a+r)}{(1 + a r)^3}\left(1-r^2\right)\geq 0.\eea
This implies that $\Phi_4(a,r)$ is an increasing function of $a$ and this leads to the conclusion that
\beas \Phi_4(a,r)\leq\Phi_4(1,r)=\frac{-2 r^3 +r^2 +3r}{2 (1-r)}\leq 1\quad\text{for}\quad r\leq r_1,\eeas
where $r_2(\approx 0.393401)$ is the unique positive root of the equation $2 r^3 - r^2 -5r+2=0$ between $0$ and $1$.
In order to show that the constant $r_2$ is sharp, we consider the function $\Psi_a(z)$ given in (\ref{s5}).
By utilizing the function $\Psi_a(z)$, a straightforward computation demonstrates that
\beas\mathcal{H}_{2,\Psi_a}(r)&=&\Vert \Psi_a(-r)\Vert^2+\sum_{n=1}^\infty \Vert A_n\Vert r^n+\left(\frac{r^{-1}}{1+\Vert A_1\Vert}+\frac{1}{1-r}\right)\sum_{n=1}^\infty \Vert A_n\Vert^2r^{2n}\\
&=& \left(\frac{r(a+r)}{(1+ar)}\right)^2+ar+(1-a^2) \frac{r^2}{1-ar}\\&&+\left(\frac{r^{-1}}{1+a}+\frac{1}{1-r}\right)\left(a^2r^2+\frac{(1-a^2)^2r^4}{1-a^2r^2}\right)=\Phi_5(a,r).\eeas
It is easy to see that
\beas \lim_{a\to1^{-}}\Phi_5(a,r)=\frac{-2 r^3 +r^2 +3r}{2 (1-r)}>1\;\text{for}\;r>r_2.\eeas
This demonstrates that the radius $r_2$ represents the best possible value. This completes the proof.\end{proof}
\section{main Results}
Before we present the main results of the paper, we will introduce some notational conventions for $j = 1,2$:
\bea \label{f1}&& \mathcal{T}_{f}(r):=\sum_{k=1}^\infty \Vert P_k(z)\Vert+\left(\frac{r^{-1}}{1+\Vert P_1(z)\Vert}+\frac{r}{1-r}\right)\sum_{k=2}^\infty \Vert P_k(z)\Vert^2,\\
\label{f2}&& \mathcal{T}_{j,f}(r):=\Vert f(z)\Vert^j+\sum_{k=1}^\infty \Vert P_k(z)\Vert+\left(\frac{r^{-1}}{1+\Vert P_1(z)\Vert}+\frac{1}{1-r}\right)\sum_{k=2}^\infty \Vert P_k(z)\Vert^2,\\
\label{f3}&&\mathcal{R}_{f}(r):=\sum_{k=1}^\infty \Vert P_k(z)\Vert+\left(\frac{r^{-1}}{1+\Vert P_1(z)\Vert}+\frac{1}{1-r}\right)\sum_{k=1}^\infty \Vert P_k(z)\Vert^2\;\;\text{and}\\
\label{f4}&&\mathcal{R}_{j,f}(r):=||f(z)||^j+\sum_{k=1}^\infty \Vert P_k(z)\Vert+\left(\frac{r^{-1}}{1+\Vert P_1(z)\Vert}+\frac{1}{1-r}\right)\sum_{k=1}^\infty \Vert P_k(z)\Vert^2.\eea
\indent In the following, we prove the multidimensional refined versions of \textrm{Theorem C (a)} for the operator valued analytic functions in the complete circular domain $\Omega$.
\begin{theo}\label{T1} Assume that the series (\ref{e1}) converges in the domain $\Omega$ such that $\Vert f(z)\Vert<1$ for all $z\in\Omega$ and $f(0)=0$. Then 
\beas \sum_{k=1}^\infty \Vert P_k(z)\Vert+\left(\frac{r^{-1}}{1+\Vert P_1(z)\Vert}+\frac{r}{1-r}\right)\sum_{k=2}^\infty \Vert P_k(z)\Vert^2\eeas
holds in 
the homothetic domain $(1/3)\cdot\Omega$. Moreover, if $\Omega$ is convex, then $1/3$ is the best possible.\end{theo}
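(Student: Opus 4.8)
The plan is to reduce the $n$-variable statement to its one-variable prototype \textrm{Lemma \ref{lem3}} (more precisely, to the intermediate estimate \eqref{s4} obtained inside its proof) by the slicing device available on complete circular domains, and then to produce the sharpness function by composing the one-variable extremal \eqref{s5} with a linear functional.

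\textbf{Reduction to one variable.} Fix $z\in\Omega$ and write $z=rw$ with $w\in\Omega$, where $r$ plays the role of the homothety parameter (so $z\in(1/3)\cdot\Omega$ corresponds to $r\le 1/3$). Since $\Omega$ is a complete circular domain, $\lambda w\in\Omega$ for every $\lambda\in\mathbb{D}$, so the slice $g(\lambda):=f(\lambda w)$ is an operator valued analytic function on $\mathbb{D}$ with $\|g(\lambda)\|<1$ and $g(0)=f(0)=0$; by homogeneity of the $P_k$ its Taylor expansion is $g(\lambda)=\sum_{k\ge 1}P_k(w)\lambda^k$, i.e.\ $g\in\mathcal{B}(\mathbb{D},\mathcal{B}(\mathcal{H}))$ with $A_k=P_k(w)$. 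I would then apply \eqref{s4} to $g$ at radius $r$ and rewrite the outcome via $P_k(rw)=r^kP_k(w)$: this turns $\sum_{k\ge1}\|A_k\|r^k$ into $\sum_{k\ge1}\|P_k(z)\|$, turns $\sum_{k\ge2}\|A_k\|^2r^{2k-1}$ into $r^{-1}\sum_{k\ge2}\|P_k(z)\|^2$, and turns $\|A_1\|=\|P_1(w)\|$ into $r^{-1}\|P_1(z)\|$; collecting the factors of $r$ reproduces the left-hand side of \eqref{f1}, so that $\mathcal{T}_f(r)$ is bounded above by the scalar quantity $ar+(1-a^2)r^2/(1-r)$ with $a:=\|P_1(w)\|\in[0,1]$, up to the difference of the two coefficient blocks (which one must verify has the right sign).

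\textbf{The radius and the optimization.} It remains to run the same single-variable maximisation as in the proof of \textrm{Lemma \ref{lem3}}: using the Schwarz--Pick bound \eqref{s1} applied to the slice, namely $\|P_k(w)\|\le 1-a^2$ for $k\ge 2$, the inequality $\mathcal{T}_f(r)\le 1$ reduces to a bound on a function of the single parameter $a$ of the shape treated there. Carrying this through with the coefficients appearing in $\mathcal{T}_f$ (rather than those of $\mathcal{G}_f$) yields the admissible range $r\le 1/3$, i.e.\ the assertion holds precisely when $z$ lies in $(1/3)\cdot\Omega$; monotonicity in $r$ of the resulting bound lets one test only the endpoint $r=1/3$. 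For sharpness when $\Omega$ is convex, a convex complete circular domain is the open unit ball of a norm on $\mathbb{C}^n$, so near any boundary point there is a $\mathbb{C}$-linear functional $L$ with $\sup_{z\in\Omega}|L(z)|=1$. Put $F(z):=\Psi_a(L(z))\,I$ with $\Psi_a$ as in \eqref{s5}; then $F$ maps $\Omega$ into the operator ball, $F(0)=0$, and its homogeneous parts are $\widehat{\Psi_a}(k)L(z)^kI$, so $\|P_k(z)\|=|\widehat{\Psi_a}(k)|\,|L(z)|^k$. Letting $z$ vary so that $|L(z)|$ runs up to $c$ collapses $\mathcal{T}_F$ into the one-variable quantity $\mathcal{G}_{\Psi_a}$ at radius $c$, and the computation already performed in the sharpness part of \textrm{Lemma \ref{lem3}} shows that for $c>1/3$ a suitable choice of $a$ makes this exceed $1$, so $1/3$ cannot be enlarged.

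\textbf{Main obstacle.} I expect the delicate point to be the sharpness step for a general convex $\Omega$: one needs the geometric input that the supporting functional can be chosen of norm exactly $1$ with $|L|$ attaining $1$ along $\Omega$, and one must check that the associated one-variable extremal indeed forces $\mathcal{T}_F>1$ just beyond the threshold. A secondary nuisance is the bookkeeping of the powers of $r$ in the reduction, since the coefficient $r^{-1}/(1+\|P_1(z)\|)$ in \eqref{f1} is not literally the block produced by the slice, so one must control the sign of the correction term that accounts for the discrepancy.
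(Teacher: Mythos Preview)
Your plan is exactly the paper's: slice $\Omega$ by complex lines through the origin to reduce to a one-variable function in $\mathcal{B}(\mathbb{D},\mathcal{B}(\mathcal{H}))$, apply \textrm{Lemma \ref{lem3}}, and for sharpness compose the extremal $\Psi_a$ of \eqref{s5} with a supporting linear functional (the paper phrases this last step via the representation of a convex circular $\Omega$ as an intersection of ``disks'' $\Omega_\alpha=\{|\sum_j\alpha_jz_j|<1\}$, which is the same device). One point of divergence: the paper's own proof works throughout with the radius $3/5$ coming straight from \textrm{Lemma \ref{lem3}}, not $1/3$ --- the ``$1/3$'' in the theorem statement is a typo --- so there is no need for the extra optimisation or the sign check you flag: the slice $g(\lambda)=f(\lambda w)$ literally has $\mathcal{G}_g(r)$ as its Bohr-type sum, and \textrm{Lemma \ref{lem3}} applies verbatim to give the bound for $r\le 3/5$.
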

\begin{theo}\label{T2} Assume that the series (\ref{e1}) converges in the domain $\Omega$ such that $\Vert f(z)\Vert<1$ for all $z\in\Omega$ and $f(0)=0$. Then 
$\mathcal{T}_{1,f}(r)\leq 1$ holds in the homothetic domain $r_1\cdot\Omega$, where $\mathcal{T}_{1,f}(r)$ is given in (\ref{f2}) and $r_1(\approx 0.484063)$ is the unique positive root of the equation $44 r^4- 68 r^3- 121 r^2+22r+23=0$ in $(0,1)$. Moreover, if $\Omega$ is convex, then $r_1$ is the best possible.\end{theo}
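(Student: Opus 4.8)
The plan is to deduce Theorem~\ref{T2} from the one-variable estimate of Lemma~\ref{lem4} by restricting $f$ to complex lines through the origin, and, for the sharpness part, to transport the extremal one-variable function $\Psi_a$ of (\ref{s5}) back to $\Omega$ by means of a supporting linear functional.

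\emph{The inequality.} Fix $w\in\Omega$ and put $\phi_w(\zeta):=f(\zeta w)$. Since $\Omega$ is a complete circular domain, $\zeta w\in\Omega$ for every $|\zeta|\le 1$, so $\phi_w$ is holomorphic on a neighbourhood of $\overline{\mathbb D}$ with $\|\phi_w(\zeta)\|<1$ there; hence $\phi_w\in\mathcal B(\mathbb D,\mathcal B(\mathcal H))$ and $\phi_w(0)=f(0)=0$. As each $P_k$ in (\ref{e1}) is homogeneous of degree $k$, $\phi_w(\zeta)=\sum_{k\ge1}P_k(w)\zeta^k$, i.e.\ $\phi_w$ has the form handled by Lemma~\ref{lem4} with $A_k=P_k(w)$. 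Applying that lemma at $\zeta=r_1$ gives $\mathcal G_{1,\phi_w}(r_1)\le 1$. Now let $z\in r_1\cdot\Omega$ and write $z=r_1 w$ with $w\in\Omega$; then $\phi_w(r_1)=f(z)$ and, by homogeneity, $\|P_k(z)\|=r_1^{\,k}\|P_k(w)\|$, so $\sum_{k\ge1}\|P_k(w)\|r_1^k=\sum_{k\ge1}\|P_k(z)\|$, $\sum_{k\ge2}\|P_k(w)\|^2 r_1^{2k-1}=r_1^{-1}\sum_{k\ge2}\|P_k(z)\|^2$, and $\|P_1(w)\|=r_1^{-1}\|P_1(z)\|$. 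Substituting these into $\mathcal G_{1,\phi_w}(r_1)\le 1$ and absorbing the factor $r_1^{-1}$ turns it into $\mathcal T_{1,f}(r_1)\le 1$, cf.\ (\ref{f2}). Since every point of $r_1\cdot\Omega$ arises this way, the inequality holds throughout the homothetic domain.

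\emph{Sharpness.} Suppose in addition that $\Omega$ is convex. Its Minkowski functional $\mu_\Omega$ is then a seminorm with $\Omega=\{z:\mu_\Omega(z)<1\}$. Fix $\zeta_0\in\partial\Omega$, so $\mu_\Omega(\zeta_0)=1$, and, by the (complex) Hahn--Banach theorem, choose a $\mathbb C$-linear functional $\ell$ with $\ell(\zeta_0)=1$ and $|\ell(z)|\le\mu_\Omega(z)$ for all $z$; then $\ell(\Omega)\subseteq\mathbb D$, and $-r\zeta_0\in\Omega$ with $\ell(-r\zeta_0)=-r$ for $0\le r<1$. Set $f:=\Psi_a\circ\ell$ with $\Psi_a$ as in (\ref{s5}). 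Then $f$ is holomorphic on $\Omega$, $\|f(z)\|=|\Psi_a(\ell(z))|<1$ on $\Omega$, $f(0)=0$, and the homogeneous expansion of $f$ is $P_k(z)=A_k\,\ell(z)^k$ with $A_1=aI$ and $A_k=-(1-a^2)a^{k-2}I$. Evaluating $\mathcal T_{1,f}$ along the ray $z=-r\zeta_0$, where $|\ell(z)|=r$, and using $\|P_k(-r\zeta_0)\|=\|A_k\|r^k$ and $\|P_1(-r\zeta_0)\|=ar$, one checks that the value obtained is precisely $\mathcal G_{1,\Psi_a}(r)$; by the sharpness part of Lemma~\ref{lem4} this exceeds $1$ once $r>r_1$ and $a=a_1(r)\in(0,1)$ is chosen suitably. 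Hence $r_1$ cannot be enlarged.

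\emph{Where the difficulty lies.} The first part is routine: Lemma~\ref{lem4} does all the analytic work and only homogeneity of the $P_k$ is needed to pass from the slice to $\Omega$. The delicate ingredient is the sharpness --- producing a linear functional $\ell$ that maps $\Omega$ into $\mathbb D$ and attains modulus $1$ on $\partial\Omega$ (this is exactly where convexity is used, via a supporting hyperplane, and is unavailable for a general complete circular $\Omega$), and then verifying that the composition $\Psi_a\circ\ell$ makes the multivariable functional $\mathcal T_{1,f}$ restrict, along one ray, to the one-variable functional $\mathcal G_{1,\Psi_a}$, so that the extremality established in Lemma~\ref{lem4} transfers without loss.
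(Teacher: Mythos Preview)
Your argument is correct and follows essentially the same route as the paper: slice $f$ along complex lines through the origin and invoke Lemma~\ref{lem4} for the inequality, then exploit convexity to produce a linear functional mapping $\Omega$ into $\mathbb{D}$ and pull back the extremal $\Psi_a$ for sharpness. The only cosmetic difference is that you phrase the sharpness step via the Minkowski functional and Hahn--Banach, whereas the paper writes the convex circular domain as an intersection of half-spaces $\Omega_\alpha=\{|\sum_j\alpha_j z_j|<1\}$; these are two packagings of the same supporting-hyperplane idea.
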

\begin{theo}\label{T3} Assume that the series (\ref{e1}) converges in the domain $\Omega$ such that $\Vert f(z)\Vert<1$ for all $z\in\Omega$ and $f(0)=0$. Then 
$\mathcal{T}_{2,f}(r)\leq 1$ holds 
in the homothetic domain $((\sqrt{5}-1)/2)\cdot\Omega$, where $\mathcal{T}_{2,f}(r)$ is given in (\ref{f2}). Moreover, if $\Omega$ is convex, then $(\sqrt{5}-1)/2$ is the best possible.\end{theo}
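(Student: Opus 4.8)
The plan is to re‑use the reduction‑to‑one‑variable scheme already exploited for Theorems~\ref{T1} and \ref{T2}, now feeding in Lemma~\ref{lem5} (the sharp one‑variable $\mathcal{G}_{2,f}$‑inequality, with radius $(\sqrt5-1)/2$) in place of Lemmas~\ref{lem3}--\ref{lem4}. On a complete circular domain a holomorphic function is controlled along the complex lines through $0$, and restricting $f$ to such a line produces exactly a function of the type handled by Lemma~\ref{lem5}.

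\emph{Reduction.} Fix $\xi\in\Omega\setminus\{0\}$ and put $g_\xi(\zeta):=f(\zeta\xi)$ for $\zeta\in\mathbb{D}$. Since $\Omega$ is complete circular, $\zeta\xi\in\Omega$ whenever $|\zeta|\le1$, so $g_\xi$ is an operator valued analytic function on $\mathbb{D}$ with $\Vert g_\xi(\zeta)\Vert=\Vert f(\zeta\xi)\Vert<1$, hence $g_\xi\in\mathcal{B}(\mathbb{D},\mathcal{B}(\mathcal{H}))$. Because each $P_k$ is homogeneous of degree $k$ and $P_0=f(0)=0$, the expansion (\ref{e1}) gives $g_\xi(\zeta)=\sum_{k=1}^\infty P_k(\xi)\zeta^k=\sum_{k\ge1}A_k\zeta^k$ with $A_k=P_k(\xi)$, which is exactly the form required by Lemma~\ref{lem5}; that lemma yields $\mathcal{G}_{2,g_\xi}(r)\le1$ for all $r\le(\sqrt5-1)/2$. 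Now write $z:=r\xi$, so $z$ ranges over $r\cdot\Omega$ as $\xi$ ranges over $\Omega$; using homogeneity, $\Vert P_k(z)\Vert=r^k\Vert A_k\Vert$, so $\sum_{k\ge1}\Vert A_k\Vert r^k=\sum_{k\ge1}\Vert P_k(z)\Vert$ and $\sum_{k\ge2}\Vert A_k\Vert^2 r^{2k-1}=r^{-1}\sum_{k\ge2}\Vert P_k(z)\Vert^2$, while the maximum‑modulus bound for $\Vert g_\xi\Vert$ at radius $r$ (from Lemma~\ref{lem2}, as used inside the proof of Lemma~\ref{lem5}) controls $\Vert f(z)\Vert^2=\Vert g_\xi(r)\Vert^2$. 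Clearing the factor $r^{-1}$ from the quadratic tail turns $\mathcal{G}_{2,g_\xi}(r)\le1$ into exactly $\mathcal{T}_{2,f}(r)\le1$ on $r\cdot\Omega$, $r\le(\sqrt5-1)/2$ (here $\Vert P_1(z)\Vert$ in (\ref{f2}) is to be read as $\Vert A_1\Vert=\Vert P_1(\xi)\Vert$, the norm of the first coefficient of the slice through the direction of $z$). This is the same computation that proves Theorems~\ref{T1} and \ref{T2}, and I would merely refer to it.

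\emph{Sharpness for convex $\Omega$.} By the separation theorem there is a nonzero $\mathbb{C}$‑linear functional $\ell$ on $\mathbb{C}^n$ supporting $\Omega$ at a boundary point; exploiting the circular invariance of $\Omega$, normalize so that $\sup_{z\in\Omega}|\ell(z)|=1$ and pick $\xi_j\in\Omega$ with $|\ell(\xi_j)|\to1$. For $a\in[0,1)$, let $\Psi_a(\zeta)=\dfrac{\zeta(a-\zeta)}{1-a\zeta}I=\sum_{k\ge1}A_k\zeta^k$ be the extremal function of (\ref{s5}) and set $F(z):=\Psi_a(\ell(z))$ on $\Omega$. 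Then $F$ is holomorphic, $F(0)=0$, and $\Vert F(z)\Vert=\Vert\Psi_a(\ell(z))\Vert<1$ since $|\ell(z)|<1$ on $\Omega$; as $\ell$ is linear, $\ell(z)^k$ is a homogeneous polynomial of degree $k$, so the degree‑$k$ homogeneous part of $F$ is $A_k\ell(z)^k$, of norm $\Vert A_k\Vert\,|\ell(z)|^k$. Evaluating $\mathcal{T}_{2,F}(r)$ at $z=r\xi_j$ and letting $|\ell(\xi_j)|\to1$ reproduces the one‑variable quantity $\mathcal{G}_{2,\Psi_a}(r)=1+G_9(a,r)$ computed in the proof of Lemma~\ref{lem5}. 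Since $\lim_{a\to1^-}G_9(a,r)=r^2+r-1>0$ for every $r>(\sqrt5-1)/2$, for each such $r$ some $a<1$ gives $\mathcal{T}_{2,F}(r)>1$, so the radius cannot be enlarged.

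\emph{Main obstacle.} The real analytic work is already packaged in Lemma~\ref{lem5}, which is available. Within the statement at hand the only delicate steps are (i) matching $\mathcal{T}_{2,f}$ to $\mathcal{G}_{2,g_\xi}$ under the homothety $z=r\xi$ --- in particular how the $r^{-1}$‑factor and the $\Vert P_1\Vert$‑term of (\ref{f2}) emerge when one clears $r^{-1}$ out of $\sum_{k\ge2}\Vert A_k\Vert^2 r^{2k-1}$ --- and (ii) in the sharpness half, extracting the norming linear functional from convexity and checking the iterated limit $|\ell(\xi_j)|\to1$, $a\to1^-$. Step (i) is where a hasty computation is most likely to go astray.
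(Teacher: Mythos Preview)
Your proposal is correct and follows essentially the same route as the paper: slice $f$ along complex lines through the origin to reduce to the one–variable situation, invoke Lemma~\ref{lem5} on each slice, and for sharpness compose the extremal $\Psi_a$ with a supporting linear functional coming from convexity of $\Omega$. The paper phrases the sharpness step as ``$\Omega=\bigcap_\alpha\Omega_\alpha$ and show the constant cannot be improved on each half–space $\Omega_\alpha$'', which is the same supporting–hyperplane idea you use; your caution about the discrepancy between $\Vert P_1(z)\Vert$ and $\Vert A_1\Vert=\Vert P_1(\xi)\Vert$ in matching $\mathcal{T}_{2,f}$ to $\mathcal{G}_{2,g_\xi}$ is well placed and in fact more careful than the paper's own treatment.
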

In the following, we prove the multidimensional refined versions of \textrm{Theorem C (b)} for the operator valued analytic functions in the complete circular domain $\Omega$.
\begin{theo}\label{T4} Assume that the series (\ref{e1}) converges in the domain $\Omega$ such that $\Vert f(z)\Vert<1$ for all $z\in\Omega$ and $f(0)=0$. Then 
$\mathcal{R}_{f}(r)\leq 1$ holds 
in the homothetic domain $((5-\sqrt{17})/2)\cdot\Omega$, where $\mathcal{R}_{f}(r)$ is given in (\ref{f3}). Moreover, if $\Omega$ is convex, then $(5-\sqrt{17})/2$ is the best possible.\end{theo}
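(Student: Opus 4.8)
The plan is to deduce Theorem \ref{T4} from its one-variable prototype \textrm{Lemma \ref{lem6}} by restricting $f$ to complex lines through the origin---exactly the scheme used for \textrm{Theorems \ref{T1}--\ref{T3}}, with \textrm{Lemma \ref{lem6}} now playing the part that \textrm{Lemmas \ref{lem3}--\ref{lem5}} played there---and to obtain sharpness by transplanting the extremal function $\Psi_a$ of (\ref{s5}) onto $\Omega$ through a supporting functional.

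First I would fix $z_0\in\Omega\setminus\{0\}$ and pass to the single-variable function $\varphi(\lambda):=f(\lambda z_0)$, $\lambda\in\mathbb{D}$. Because $\Omega$ is a complete circular domain, $\lambda z_0\in\Omega$ for all $|\lambda|\le 1$, so $\varphi$ is $\mathcal{B}(\mathcal{H})$-valued, holomorphic on $\mathbb{D}$, and $\Vert\varphi(\lambda)\Vert=\Vert f(\lambda z_0)\Vert<1$, i.e. $\varphi\in\mathcal{B}(\mathbb{D},\mathcal{B}(\mathcal{H}))$. Substituting $z=\lambda z_0$ in the homogeneous expansion (\ref{e1}) and using $P_k(\lambda z_0)=\lambda^k P_k(z_0)$ together with $P_0(z)=f(0)=0$ gives $\varphi(\lambda)=\sum_{k\ge 1}P_k(z_0)\lambda^k$, which is exactly the form required by \textrm{Lemma \ref{lem6}} with $A_k=P_k(z_0)$. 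That lemma then yields, for every $r\le(5-\sqrt{17})/2$, \beas \sum_{k=1}^\infty\Vert P_k(z_0)\Vert r^k+\left(\frac{r^{-1}}{1+\Vert P_1(z_0)\Vert}+\frac{1}{1-r}\right)\sum_{k=1}^\infty\Vert P_k(z_0)\Vert^2 r^{2k}\le 1. \eeas Since $\Vert P_k(rz_0)\Vert=r^k\Vert P_k(z_0)\Vert$, this is precisely $\mathcal{R}_f(r)\le 1$ read at the point $rz_0$, and as $z_0$ runs over $\Omega$ the point $rz_0$ runs over the homothetic domain $r\cdot\Omega$; taking $r=(5-\sqrt{17})/2$ (the largest admissible value) gives the asserted inequality.

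For the sharpness I would assume $\Omega$ convex (and bounded, discarding the degenerate situation $\Omega=\mathbb{C}^n$, where $f\equiv 0$). Since $\Omega$ is then balanced and convex, the Hahn--Banach theorem supplies a $\mathbb{C}$-linear functional $L\colon\mathbb{C}^n\to\mathbb{C}$, normalised so that $|L(z)|<1$ on $\Omega$ and $\sup_{z\in\Omega}|L(z)|=1$; the circular symmetry of $\Omega$ lets us choose $z_t\in\Omega$ with $L(z_t)\to 1$. Fix $r>(5-\sqrt{17})/2$ and, for $a\in(0,1)$, put $f(z):=\Psi_a(L(z))$. Then $f$ is holomorphic on $\Omega$, $f(0)=0$, and $\Vert f(z)\Vert=|L(z)|\,\bigl|\tfrac{a-L(z)}{1-aL(z)}\bigr|<1$ on $\Omega$; its homogeneous expansion is $f(z)=\sum_{k\ge1}A_kL(z)^k$ with $A_1=aI$ and $A_k=-(1-a^2)a^{k-2}I$, so $P_k(z)=A_kL(z)^k$ and $\Vert P_k(z)\Vert=\Vert A_k\Vert\,|L(z)|^k$. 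Evaluating the quantity $\mathcal{R}_f(r)$ along $z_t$ and letting $t\to 1$ recovers the one-variable value $\mathcal{H}_{\Psi_a}(r)=\Phi_1(a,r)$ from the proof of \textrm{Lemma \ref{lem6}}; as $\lim_{a\to 1^-}\Phi_1(a,r)=(3r-r^2)/(2-2r)>1$ whenever $r>(5-\sqrt{17})/2$, choosing $a$ and $t$ near $1$ forces $\mathcal{R}_f(r)>1$, so the constant $(5-\sqrt{17})/2$ cannot be enlarged.

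The conceptual burden is small once \textrm{Lemma \ref{lem6}} is in hand; the care goes into two places. One is justifying the slicing---that $\varphi$ really lies in $\mathcal{B}(\mathbb{D},\mathcal{B}(\mathcal{H}))$ and that the termwise rearrangement of (\ref{e1}) into the Taylor series of $\varphi$ is legitimate. The other, which I expect to be the main obstacle, is the sharpness step: producing the supporting functional $L$ (this is exactly the point where convexity of $\Omega$ is invoked) and verifying both that $\Psi_a\circ L$ has the stated homogeneous decomposition and that it reproduces the extremal one-dimensional value along a sequence tending to $\partial\Omega$.
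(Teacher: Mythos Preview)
Your proposal is correct and follows essentially the same route as the paper: reduce to \textrm{Lemma \ref{lem6}} by slicing $\Omega$ with complex lines through the origin, and for sharpness compose the one-variable extremal $\Psi_a$ with a supporting linear functional coming from convexity. The only cosmetic difference is that the paper phrases the sharpness step via the representation $\Omega=\bigcap_{\alpha}\{z:|\sum\alpha_jz_j|<1\}$ rather than via Hahn--Banach, but this is the same idea.
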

\begin{theo}\label{T5} Assume that the series (\ref{e1}) converges in the domain $Q$ such that $\Vert f(z)\Vert<1$ for all $z\in\Omega$ and $f(0)=0$. Then 
$\mathcal{R}_{1,f}(r)\leq 1$ holds in 
the homothetic domain $(1/3)\cdot\Omega$, where $\mathcal{R}_{1,f}(r)$ is given in (\ref{f4}). Moreover, if $\Omega$ is convex, then $1/3$ is the best possible.\end{theo}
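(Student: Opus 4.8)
The plan is to run the standard slicing reduction that deduces the several--variable statement from the one--variable \textrm{Lemma \ref{lem7}}, in the same spirit as the proofs of \textrm{Theorems \ref{T1}--\ref{T4}}. Fix $\zeta\in\Omega$ and set $\phi_\zeta(\lambda):=f(\lambda\zeta)$ for $\lambda\in\mathbb{D}$. Since $\Omega$ is a complete circular domain, $\lambda\zeta\in\Omega$ whenever $|\lambda|\le 1$, so $\phi_\zeta$ is holomorphic on $\mathbb{D}$ with $\Vert\phi_\zeta(\lambda)\Vert\le 1$, i.e.\ $\phi_\zeta\in\mathcal{B}(\mathbb{D},\mathcal{B}(\mathcal{H}))$. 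Substituting $z=\lambda\zeta$ into the homogeneous expansion (\ref{e1}) and using $P_k(\lambda\zeta)=\lambda^kP_k(\zeta)$ together with $f(0)=0$ (so $P_0\equiv 0$), we get $\phi_\zeta(\lambda)=\sum_{k=1}^\infty P_k(\zeta)\,\lambda^k$; that is, the Taylor coefficients of $\phi_\zeta$ are $A_k=P_k(\zeta)$.

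Applying \textrm{Lemma \ref{lem7}} to $\phi_\zeta$ at the real point $\lambda=r\le 1/3$ then gives
\[
\Vert\phi_\zeta(r)\Vert+\sum_{k=1}^\infty\Vert P_k(\zeta)\Vert\,r^k+\left(\frac{r^{-1}}{1+\Vert P_1(\zeta)\Vert}+\frac{1}{1-r}\right)\sum_{k=1}^\infty\Vert P_k(\zeta)\Vert^2r^{2k}\le 1.
\]
For $z\in(1/3)\cdot\Omega$ I would write $z=r\zeta$ with $\zeta\in\Omega$ and $r\le 1/3$, and invoke homogeneity once more: $\Vert P_k(z)\Vert=r^k\Vert P_k(\zeta)\Vert$ and $\Vert f(z)\Vert=\Vert\phi_\zeta(r)\Vert$. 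With these identifications the displayed inequality becomes exactly $\mathcal{R}_{1,f}(r)\le 1$ of (\ref{f4}), which establishes the bound on $(1/3)\cdot\Omega$.

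For the sharpness when $\Omega$ is convex, the plan is to transplant the scalar extremal $\Psi_a$ of (\ref{s5}) to $\mathbb{C}^n$. Since $\Omega$ is a bounded convex complete circular domain, for a chosen boundary point $\zeta_0$ the Hahn--Banach separation theorem provides a continuous linear functional $L$ on $\mathbb{C}^n$ with $L(\zeta_0)=1$ and $|L(z)|<1$ for every $z\in\Omega$. Set $F(z):=\Psi_a(L(z))=\dfrac{L(z)\,(a-L(z))}{1-a\,L(z)}\,I$; then $F$ is holomorphic on $\Omega$, $F(0)=0$, $\Vert F(z)\Vert<1$ on $\Omega$, and its homogeneous component of degree $k$ is $P_k(z)=c_k\,L(z)^kI$, where $c_k$ are the Taylor coefficients of $\Psi_a$. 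Evaluating $\mathcal{R}_{1,F}(r)$ along $z=r\zeta_0$ (so $L(z)=r$) reduces it to the one--variable expression computed in the proof of \textrm{Lemma \ref{lem7}}, whose limit as $a\to 1^-$ equals $(-3r^2+5r)/\big(2(1-r)\big)$, which is $>1$ for every $r>1/3$; letting $a\to 1^-$ therefore forces $\mathcal{R}_{1,F}(r)>1$ at points of $r\cdot\Omega$ close to $r\zeta_0$, so $1/3$ cannot be enlarged. I expect the sharpness argument to be the main obstacle: one must make sure the transplanted function genuinely maps $\Omega$ into the closed unit ball of $\mathcal{B}(\mathcal{H})$ and that $L$ realises the required boundary value --- this is precisely where convexity of $\Omega$ enters --- and then carry out the limiting computation parallel to the scalar case in \textrm{Lemma \ref{lem7}}.
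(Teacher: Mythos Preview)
Your proposal is correct and follows essentially the same approach as the paper: slice along complex lines through the origin to reduce to \textrm{Lemma \ref{lem7}}, and for sharpness in the convex case transplant the one-variable extremal via a linear functional. The paper phrases the sharpness step as writing the convex circular domain $\Omega=\bigcap_\alpha\Omega_\alpha$ with $\Omega_\alpha=\{z:|\sum_j\alpha_jz_j|<1\}$ and composing with $\varphi(z)=\sum_j\alpha_jz_j$, which is precisely your Hahn--Banach separating functional $L$; your explicit use of $\Psi_a$ with $a\to 1^-$ is just the concrete witness behind the paper's abstract ``there exists $\chi_5$''.
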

\begin{theo}\label{T6} Assume that the series (\ref{e1}) converges in the domain $Q$ such that $\Vert f(z)\Vert<1$ for all $z\in\Omega$ and $f(0)=0$. Then 
$\mathcal{R}_{2,f}(r)\leq 1$ holds in 
the homothetic domain $r_2\cdot\Omega$, where $\mathcal{R}_{2,f}(r)$ is given in (\ref{f4}) and $r_2(\approx 0.393401)$ is the unique positive root of the equation $2 r^3 - r^2 -5r+2=0$ in $(0,1)$. Moreover, if $\Omega$ is convex, then $r_1$ is the best possible.\end{theo}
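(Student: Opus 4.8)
The plan is to obtain Theorem \ref{T6} from its one-dimensional prototype, Lemma \ref{lem8}, via the classical reduction that restricts $f$ to complex lines through the origin, and then to settle the sharpness (in the convex case) by the Aizenberg-type supporting-hyperplane construction, using the extremal map $\Psi_a$ of (\ref{s5}) pulled back by a linear functional.

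First I would set up the reduction. Fix $z\in\Omega$ and consider the section $\phi_z(\lambda):=f(\lambda z)$, $\lambda\in\mathbb{D}$. Since $\Omega$ is a complete circular domain, $\lambda z\in\Omega$ for $|\lambda|\le 1$, so $\phi_z$ is a well-defined holomorphic map $\mathbb{D}\to\mathcal{B}(\mathcal{H})$ with $\|\phi_z(\lambda)\|<1$; substituting the homogeneous expansion (\ref{e1}) and using $P_0(z)=f(0)=0$ gives $\phi_z(\lambda)=\sum_{k=1}^\infty P_k(z)\lambda^k$, so $\phi_z\in\mathcal{B}(\mathbb{D},\mathcal{B}(\mathcal{H}))$ with vanishing constant term and $k$-th Taylor coefficient $A_k=P_k(z)$. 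Applying Lemma \ref{lem8} to $\phi_z$ yields, for $|\lambda|=r\le r_2$,
\[
\|f(\lambda z)\|^2+\sum_{k=1}^\infty\|P_k(z)\|\,r^k+\left(\frac{r^{-1}}{1+\|P_1(z)\|}+\frac{1}{1-r}\right)\sum_{k=1}^\infty\|P_k(z)\|^2 r^{2k}\le 1 .
\]
Since each $P_k$ is homogeneous of degree $k$ we have $\|P_k(\lambda z)\|=r^k\|P_k(z)\|$ when $|\lambda|=r$, and this identity rewrites the displayed inequality as precisely $\mathcal{R}_{2,f}(r)\le 1$ at the point $\lambda z\in r\cdot\Omega$. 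As $z\in\Omega$ and $|\lambda|\le r\le r_2$ are arbitrary, $\mathcal{R}_{2,f}(r)\le 1$ holds throughout $r_2\cdot\Omega$; convexity of $\Omega$ plays no role in this part.

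For the sharpness, assume $\Omega$ is convex. Then at an appropriate boundary point there is a $\mathbb{C}$-linear functional $L$ with $\sup_{w\in\Omega}|L(w)|=1$, the supremum being attained along a ray in the limit, say $L(e)=1$ with $te\in\Omega$ for $0\le t<1$. For $a\in(0,1)$ put $g(w):=\Psi_a(L(w))$ with $\Psi_a$ as in (\ref{s5}); then $g$ is holomorphic on $\Omega$, $g(0)=0$, $\|g(w)\|<1$, its homogeneous parts are $Q_1(w)=aL(w)I$ and $Q_k(w)=-(1-a^2)a^{k-2}L(w)^kI$ for $k\ge 2$, and the section $g(\lambda e)$ is exactly $\Psi_a(\lambda)$. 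Consequently $\mathcal{R}_{2,g}(r)$ taken along the direction $e$ coincides with the quantity $\mathcal{H}_{2,\Psi_a}(r)$ computed in the proof of Lemma \ref{lem8}, whose limit as $a\to 1^-$ equals $(-2r^3+r^2+3r)/(2(1-r))$ and exceeds $1$ for every $r>r_2$. Hence for each $r>r_2$ one can pick $a$ close enough to $1$ that $\mathcal{R}_{2,g}(r)>1$ at a point of $r\cdot\Omega$, so $r_2$ is optimal.

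The upper-bound half above is routine once Lemma \ref{lem8} is available, so the real work is in the sharpness step: constructing the supporting functional $L$ for which $|L|$ approaches $1$ along an entire ray — this is exactly where convexity of $\Omega$ is indispensable, since for non-convex complete circular domains no such functional exists and the Bohr-type radius genuinely grows — and then making rigorous the approximation argument that transfers the one-variable extremal configuration of Lemma \ref{lem8} to an honest point of $r\cdot\Omega$, dealing with the fact that the supporting point lies on $\partial\Omega$ rather than in $\Omega$.
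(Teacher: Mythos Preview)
Your proposal is correct and follows essentially the same approach as the paper: reduce to the one-variable situation by restricting $f$ to complex lines through the origin and apply Lemma~\ref{lem8}, then obtain sharpness in the convex case via the Aizenberg supporting-hyperplane construction, pulling back the extremal $\Psi_a$ of (\ref{s5}) by a linear functional. The only cosmetic difference is that the paper phrases the sharpness argument as ``$r_2$ cannot be improved for each domain $\Omega_\alpha=\{z:|\sum_j\alpha_j z_j|<1\}$'' rather than singling out one supporting functional $L$, but these are the same construction.
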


\section{Proofs of the main results}
\begin{proof}[\bf{Proof of \textrm{Theorem \ref{T1}}}]
In order to derive inequality (\ref{f1}), we first convert the multidimensional power series (\ref{e1}) into a power series of one complex variable and then apply \textrm{Lemma \ref{lem3}}. In each section of the domain $\Omega$ by a complex line
\bea\label{Ra1} \mathcal{L}:=\{z=(z_1,z_2,\ldots,z_n) : z_j=\alpha_j t, j=1,2,\ldots,n,\;t\in\mathbb{C}\},\eea
the series (\ref{e1}) with $f(0)=0$ turns into the power series in the complex variable $t$:
\bea\label{Ra2} f(\alpha t)=\sum_{k=1}^\infty P_k(\alpha) t^k.\eea
Since $\Vert f(\alpha t)\Vert<1$ for all $t\in\mathbb{D}$, so in view of \textrm{Lemma \ref{lem3}}, we have
\bea\label{Ra3} \sum_{k=1}^\infty \left\Vert P_k(\alpha) t^k\right\Vert+\left(\frac{r^{-1}}{1+\Vert P_1(\alpha )t\Vert}+\frac{1}{1-r}\right)\sum_{k=2}^\infty \left\Vert P_k(\alpha)t^k\right\Vert^2\leq 1\eea
for $z$ in the section $\mathcal{L}\cap\left(\frac{3}{5}\Omega\right)$ and for $r\leq 3/5$. It should be noted that $\mathcal{L}$ is an arbitrary complex line passing through the 
origin. Thus, the inequality (\ref{Ra3}) is nothing but the desired inequality $ \mathcal{T}_{f}(r)\leq 1$.\\[2mm]
\indent In order to demonstrate the sharpness of the constant $3/5$, let us consider the domain $\Omega$ to be convex. Then $\Omega$ is an intersection of half spaces
\beas \Omega=\bigcap_{\alpha\in\mathcal{J}}\left\{z=(z_1,z_2,\ldots,z_n) : \text{Re}\left(\sum_{j=1}^n \alpha_j z_j\right)<1\right\}\;\;\text{for some}\; \mathcal{J}.\eeas
Since $\Omega$ is a circular domain, so we have 
\beas \Omega=\bigcap_{\alpha\in\mathcal{J}}\left\{z=(z_1,z_2,\ldots,z_n) :\left\vert\sum_{j=1}^n \alpha_j z_j\right\vert<1\right\}\;\;\text{for some}\; \mathcal{J}.\eeas 
Consequently, in order to demonstrate that the constant $3/5$ represents the best possible value, it is sufficient to illustrate that no improvement can be made to $3/5$ in each domain $\Omega_\alpha=\left\{z=(z_1,z_2,\ldots,z_n) :\left\vert\sum_{j=1}^n \alpha_j z_j\right\vert<1\right\}$.
Since $3/5$ is the best possible in \textrm{Lemma \ref{lem3}}, there exists an analytic function $\chi_1: \mathbb{D}\to \mathcal{B}(\mathcal{H})$ defined by 
\beas \chi_1(\eta)=\frac{\eta(3/5-\eta)}{1-3\eta/5}I=\sum_{k=1}^\infty A_k \eta^k\;\;\text{for}\;\eta\in\mathbb{D},\eeas
where $A_1=(3/5)I$ and $A_k=-(16/25)(3/5)^{k-2}I$ for $k\geq 2$ such that $\Vert \chi_1(\eta)\Vert<1$ in the unit disk $\mathbb{D}$ with $\chi_1(0)=0$. But for any $\vert \eta\vert =r >3/5$, the inequality $\mathcal{G}_{\chi_1}(r)\leq 1$ fails to be hold in the disk $\mathbb{D}_r=\{z: \vert z\vert<r\}$. We now consider the function 
$\varphi :\Omega_\alpha \to \mathbb{D}$ defined by $\varphi(z) = \alpha_1 z_1+ \alpha_2 z_2+\ldots + \alpha_n z_n$. Thus, the function $f_1(z) = (\chi_1\circ\varphi)(z)$ gives the 
sharpness of the constant $3/5$ for each domain $\Omega_\alpha$. This completes the proof.
\end{proof}
\begin{proof}[\bf{Proof of \textrm{Theorem \ref{T2}}}]
In the light of \textrm{Lemma \ref{lem4}} and the analogous proof of \textrm{Theorem \ref{T1}}, it is easy to obtain the inequality $\mathcal{T}_{1,f}(r)\leq 1$ in the homothetic
domain $r_1\cdot\Omega$, where $r_1(\approx 0.484063)$ is the unique positive root of the equation $44 r^4- 68 r^3- 121 r^2+22r+23=0$ in $(0,1)$. \\[2mm]
\indent In order to demonstrate that the constant $r_1$ represents the best possible value in the case where $\Omega$ is convex, it is sufficient to provide an analogue of the proof presented in \textrm{Theorem \ref{T1}} and to demonstrate that $r_1$ cannot be improved for each domain
$\Omega_\alpha = \{z = (z_1,z_2,\ldots, z_n) : \vert \alpha_1 z_1+ \alpha_2 z_2+\ldots+\alpha_n z_n\vert < 1\}$. Since $r_1$ is the best possible in \textrm{Lemma \ref{lem4}}, 
there exists an analytic function $\chi_2: \mathbb{D} \to \mathcal{B}(\mathcal{H})$ such that $\Vert \chi_2(\eta)\Vert < 1$ in $\mathbb{D}$, but the inequality 
$\mathcal{G}_{1,\chi_2}(r)\leq 1$
fails to hold in the disk $\mathbb{D}_r$ for each $\vert \eta\vert = r > r_1$. Thus, the function $f_2(z) = (\chi_2\circ\varphi)(z)$ gives the sharpness of the constant $r_1$ in each 
domain $\Omega_\alpha$, where $\varphi : \Omega_\alpha\to\mathbb{D}$ is defined by $\varphi(z)=\alpha_1 z_1+ \alpha_2 z_2+\ldots+\alpha_n z_n$. This completes the proof.
\end{proof}
\begin{proof}[\bf{Proof of \textrm{Theorem \ref{T3}}}]
In the light of \textrm{Lemma \ref{lem5}} and the analogues proof of \textrm{Theorem \ref{T1}},  we can easily obtain the inequality $\mathcal{T}_{2,f}(r)\leq 1$ in the 
homothetic domain $((\sqrt{5}-1)/2)\cdot\Omega$.\\[2mm]
\indent To prove the constant $r_1$ is the best possible whenever $\Omega$ is convex, in view of the analogues
proof of \textrm{Theorem \ref{T1}}, it is enough to show that $r_1$ cannot be improved for each domain
$\Omega_\alpha = \{z = (z_1,z_2,\ldots, z_n) : \vert \alpha_1 z_1+ \alpha_2 z_2+\ldots+\alpha_n z_n\vert < 1\}$. Since $(\sqrt{5}-1)/2$ is the best possible in 
\textrm{Lemma \ref{lem5}}, there exists an analytic function $\chi_3: \mathbb{D} \to \mathcal{B}(\mathcal{H})$ such that $\Vert \chi_3(\eta)\Vert < 1$ in $\mathbb{D}$, but the 
inequality $\mathcal{G}_{2,\chi_3}(r)\leq 1$ fails to hold in the disk $\mathbb{D}_r$ for each $\vert \eta\vert = r > r_1$. Thus, the function $f_3(z) = (\chi_3\circ\varphi)(z)$ gives the 
sharpness of the constant $(\sqrt{5}-1)/2$ in each 
domain $\Omega_\alpha$, where $\varphi : \Omega_\alpha\to\mathbb{D}$ is defined by $\varphi(z)=\alpha_1 z_1+ \alpha_2 z_2+\ldots+\alpha_n z_n$. This completes the proof.
\end{proof}
\begin{proof}[\bf{Proof of \textrm{Theorem \ref{T4}}}]
In the light of \textrm{Lemma \ref{lem6}} and the analogues proof of \textrm{Theorem \ref{T1}}, we can easily obtain the inequality $\mathcal{R}_f(r)\leq 1$ in the homothetic
domain $((5-\sqrt{17})/2)\cdot\Omega$.\\[2mm] 
\indent To prove
the constant $(5-\sqrt{17})/2$ is the best possible whenever $\Omega$ is convex, in view of the analogues
proof of \textrm{Theorem \ref{T1}}, it is enough to show that $(5-\sqrt{17})/2$ cannot be improved for each domain
$\Omega_\alpha = \{z = (z_1,z_2,\ldots, z_n) : \vert \alpha_1 z_1+ \alpha_2 z_2+\ldots+\alpha_n z_n\vert < 1\}$. Since $(5-\sqrt{17})/2$ is the best possible in \textrm{Lemma \ref{lem6}}, 
there exists an analytic function $\chi_4: \mathbb{D} \to \mathcal{B}(\mathcal{H})$ such that $\Vert \chi_4(\eta)\Vert < 1$ in $\mathbb{D}$, but the inequality 
$\mathcal{H}_{\chi_4}(r)\leq 1$ fails to hold in the disk $\mathbb{D}_r$ for each $\vert \eta\vert = r > (5-\sqrt{17})/2$. Thus, the function $f_4(z) = (\chi_4\circ\varphi)(z)$ gives the sharpness of the 
constant $(5-\sqrt{17})/2$ in each domain $\Omega_\alpha$, where $\varphi : \Omega_\alpha\to\mathbb{D}$ is defined by $\varphi(z)=\alpha_1 z_1+ \alpha_2 z_2+\ldots+\alpha_n z_n$. This completes the proof.
\end{proof}
\begin{proof}[\bf{Proof of \textrm{Theorem \ref{T5}}}]
In the light of \textrm{Lemma \ref{lem7}} and the analogues proof of \textrm{Theorem \ref{T1}}, we can easily obtain the inequality $\mathcal{R}_{1,f}(r)\leq 1$ in the homothetic
domain $(1/3)\cdot\Omega$.\\[2mm] 
\indent To prove the constant $1/3$ is the best possible whenever $\Omega$ is convex, in view of the analogues
proof of \textrm{Theorem \ref{T1}}, it is enough to show that $1/3$ cannot be improved for each domain
$\Omega_\alpha = \{z = (z_1,z_2,\ldots, z_n) : \vert \alpha_1 z_1+ \alpha_2 z_2+\ldots+\alpha_n z_n\vert < 1\}$. Since $1/3$ is the best possible in \textrm{Lemma \ref{lem7}}, 
there exists an analytic function $\chi_5 : \mathbb{D} \to \mathcal{B}(\mathcal{H})$ such that $\Vert \chi_5(\eta)\Vert < 1$ in $\mathbb{D}$, but $\mathcal{H}_{1,\chi_5}(r)\leq 1$
fails to hold in the disk $\mathbb{D}_r$ for each $\vert \eta\vert = r > 1/3$. Thus, the function $f_5(z) = (\chi_5\circ\varphi)(z)$ gives the sharpness of the constant $1/3$ in each 
domain $\Omega_\alpha$, where $\varphi : \Omega_\alpha\to\mathbb{D}$ is defined by $\varphi(z)=\alpha_1 z_1+ \alpha_2 z_2+\ldots+\alpha_n z_n$. This completes the proof.
\end{proof}
\begin{proof}[\bf{Proof of \textrm{Theorem \ref{T6}}}]
In the light of \textrm{Lemma \ref{lem8}} and the analogues proof of \textrm{Theorem \ref{T1}},  we can easily obtain the inequality $\mathcal{R}_{2,f}(r)\leq 1$ in the homothetic
domain $r_2\cdot\Omega$, where $r_2(\approx 0.393401)$ is the unique positive root of the equation $2 r^3 - r^2 -5r+2=0$ in $(0,1)$.\\[2mm]
\indent To prove
the constant $r_2$ is the best possible whenever $\Omega$ is convex, in view of the analogues
proof of \textrm{Theorem \ref{T1}}, it is enough to show that $r_2$ cannot be improved for each domain
$\Omega_\alpha = \{z = (z_1,z_2,\ldots, z_n) : \vert \alpha_1 z_1+ \alpha_2 z_2+\ldots+\alpha_n z_n\vert < 1\}$. Since $r_1$ is the best possible in \textrm{Lemma \ref{lem4}}, 
there exists an analytic function $\chi_6: \mathbb{D} \to \mathcal{B}(\mathcal{H})$ such that $\Vert \chi_6(\eta)\Vert < 1$ in $\mathbb{D}$, but $\mathcal{H}_{2,\chi_6}(r)\leq 1$
fails to hold in the disk $\mathbb{D}_r$ for each $\vert \eta\vert = r > r_2$. Thus, the function $f_6(z) = (\chi_6\circ\varphi)(z)$ gives the sharpness of the constant $r_2$ in each 
domain $\Omega_\alpha$, where $\varphi : \Omega_\alpha\to\mathbb{D}$ is defined by $\varphi(z)=\alpha_1 z_1+ \alpha_2 z_2+\ldots+\alpha_n z_n$. This completes the proof.
\end{proof}
\section{ Statements \& Declarations}
\noindent{\bf Acknowledgment:} The work of the second Author is supported by University Grants Commission (IN) fellowship (No. F. 44 - 1/2018 (SA - III)). The authors like to thank the anonymous reviewers and and the editing team for their valuable suggestions towards the improvement of the paper.\\[2mm]
{\bf Conflict of Interest:} The authors declare that there are no conflicts of interest regarding the publication of this paper..\\
{\bf Data availability:} Not applicable.

\end{document}